\documentclass[12pt,reqno]{amsart}

\usepackage{amsmath,amssymb,latexsym,soul,cite,mathrsfs}
\usepackage{mathtools}
\usepackage{ esint }
\usepackage{enumerate}
\usepackage{lipsum}

\usepackage{color,graphicx}
\usepackage[colorlinks=true,urlcolor=blue,
citecolor=red,linkcolor=blue,linktocpage,pdfpagelabels,
bookmarksnumbered,bookmarksopen]{hyperref}
\usepackage[english]{babel}

\usepackage[left=2.5cm,right=2.5cm,top=2.5cm,bottom=2.5cm]{geometry}

\usepackage[colorinlistoftodos]{todonotes}
\usepackage{dsfont}
\makeatletter
\providecommand\@dotsep{5}
\def\listtodoname{List of Todos}
\def\listoftodos{\@starttoc{tdo}\listtodoname}
\makeatother

\numberwithin{equation}{section}
%\pagestyle{myheadings}
% \markboth{}{} \pretolerance=10000
%\def\lb{\lambda}
%\def\var{\varepsilon}
%\def\pil{\left<}
%\def\pir{\right>}

%\def\nd{\noindent}
%\def\thend{\rule{3mm}{3mm}}
%\def\mathbb Re{\mathbb{R}}

%\newtheorem{lem}{Lemma}
%\newtheorem{prop}{Proposition}
%\newtheorem{theo}{Theorem}
%\newtheorem{coro}{Corollary}
%\newtheorem{rem}{Remark}

%\newcommand{\fim}{\hfill\rule{2mm}{2mm}}
%\newcommand{\n}{{\noindent}}
%\newcommand{p}{\displaystyle}
%\newcommand{\mathbb R}{\mbox{${\rm{I\!R}}$}}
%\newcommand{\mathbb RN}{I\!\!R^N}
%\newcommand{\N}{\mbox{${\rm{I\!N}}$}}
%\newcommand{\mathbb R}{\mathrm{I\!R\!}}
%\newcommand{\mathcal N}{\mathrm{I\!N\!}}
%\newcommand{\C}{\mbox{${\rm{C\hspace{-1.8mm}\rule{0.3mm}{2.8mm}}}$}}
%\def\n{\hspace*{0em}}
%\newcommand{}{\displaystyle}
%\def\dis{\displaystyle}
%\def\theequation{\thesection.\arabic{equation}}
%\let\Section=\section

%\def\section{\setcounter{equation}{0}\Section}

\newtheorem{prop}{Proposition}[section]
\newtheorem{lemma}{Lemma}[section]
\newtheorem{theorem}{Theorem}[section]
\newtheorem{remark}{Remark}[section]

\numberwithin{equation}{section}

\theoremstyle{definition}
\newtheorem{definition}{Definition}[section]

\begin{document}
	
	\title{A two-phase free boundary problem involving exponential operator}

	\author{Pedro F. Silva Pontes}
	\author{Minbo Yang}

	\address[Pedro Fellype Silva Pontes]
	{\newline\indent Zhejiang Normal University
		\newline\indent
		School of Mathematical Sciences
		\newline\indent
		Jinhua 321004 -- People's Republic of China}
	\email{\href{fellype.pontes@gmail.com}{fellype.pontes@gmail.com}}
	
	\address[Minbo Yang]
	{\newline\indent Zhejiang Normal University
		\newline\indent
		School of Mathematical Sciences
		\newline\indent
		Jinhua 321004 -- People's Republic of China}
	\email{\href{mbyang@zjnu.edu.cn}{mbyang@zjnu.edu.cn}}

	\pretolerance10000
	
	%\begin{document}
	
	\begin{abstract}
	In this paper we are interested in the study  of a two-phase problem equipped with the $\Phi$-Laplacian operator 
		$$	\Delta_\Phi u \coloneqq \mbox{div} \left(\varphi(|\nabla u|)\dfrac{\nabla u}{|\nabla u|}\right),$$
	where $\Phi(s)=e^{s^2}-1$ and $\varphi=\Phi'$. We obtain the existence, boundedness, and Log-Lipschitz regularity of the minimizers of the energy functional associated to the two-phase problem. Furthermore, we also prove that the free boundaries of these minimizers have locally finite perimeter and Hausdorff dimension at most $(N-1)$.
	\end{abstract}

\thanks{Pedro Fellype Silva Pontes was partially supported by ZJNU (YS304024910) and Minbo Yang, who is the corresponding author, was partially supported by the National Key Research and Development Program of China (No. 2022YFA1005700), National Natural Science Foundation of China (12471114) and Natural Science Foundation of Zhejiang Province (LZ22A010001).}
\subjclass[2020]{35R35, 35B65, 46E30.}
\keywords{Free boundaries problems, degenerate operator, exponential functional, finite perimeter.}

\maketitle

	\section{Introduction}
In the last decades, there has been a great increase in interest in the study of two-phase partial differential equations arising from multiphase flow in porous media, phase transmission in materials science and the dynamics of biological systems. Important examples of two-phase problems include multiphase flow in porous media, where understanding the behavior of different fluids (e.g., water, oil, gas) in porous substrates such as soil and rock is of great importance in fields such as petroleum engineering, hydrology, and soil mechanics, see for example \cite{AMW,BB}. Furthermore, in materials science, phenomena such as the famous ice-water transition illustrate the importance of two-phase models, see \cite{AT,S}. In addition, biological systems and biomedical engineering, as well as environmental and atmospheric sciences, also feature two-phase dynamics, exemplified by phenomena such as ocean-atmosphere interactions and multiphase reactions in air pollution, see \cite{ACM,BV,SP} and references therein.

There is a lot of literature dealing with two-phase problems, for instance, Amaral and Teixeira \cite{AT},  Braga \cite{B}, Braga and Moreira \cite{BM}, Leit\~{a}o, de Queiroz and Teixeira \cite{LQO}, Shrivastava \cite{S}, Zheng, Zhang and Zhao \cite{ZZZ}. Specifically, the investigations conducted in \cite{AT} and \cite{S} focused on the analysis of minimizers associated with the following functional:
$$J(\Omega,v) = \int_\Omega \Big(\mathcal{A}(x,v)|\nabla v|^p - f(x,v)v + \gamma(x,v)\Big)dx,$$
where
$$\mathcal{A}(x,s) = \mathcal{A}_+(x)\chi_{[s>0]} + \mathcal{A}_-(x)\chi_{[s\le 0]},$$
$$f(x,s) = f_+(x)\chi_{[s>0]} + f_-(x)\chi_{[s\le0]} \; \mbox{and} \; \gamma(x,s) = \gamma_+(x)\chi_{[s>0]} + \gamma_-(x)\chi_{[s\le0]},$$
with $\lambda \le \mathcal{A}_\pm(x) \le \Lambda$, for some $0<\lambda \le \Lambda <\infty$ fixed constants, $\gamma_\pm$ are continuous and integrable real valued functions on $\Omega$, $f_\pm \in L^q(\Omega)$ for $q > \frac{N}{p}$ and $p=2$, for \cite{AT}, or $p\ge2$, for \cite{S}. Their respective studies have demonstrated the existence, along with H\"{o}lder regularity, of minimizers for the functional $J$. Furthermore, with regard to the properties established for free boundary, \cite{AT} established the geometric non-degeneracy of the free boundary $\partial\{u>0\}$, while \cite{S} proved that the free boundaries $\partial \{u>0\}$ and $\partial \{u<0\}$ have locally finite perimeter.

It is natural to consider the two-phase problems with operators much more comprehensive than the operator $\mathcal{A}$. One generalization is to consider a function $G$ satisfying, for some $\delta,g_0>0$,
\begin{equation}\label{Lie}
	\delta\le \dfrac{g'(s)s}{g(s)} \le g_0,
\end{equation}
where $g=G'$, and then the $G$-Laplacian operator, defined by
\begin{equation}\label{G-lapla}
	\Delta_G u \coloneqq \mbox{div} \left(g(|\nabla u|)\dfrac{\nabla u}{|\nabla u|}\right).
\end{equation}
As is already well known, Lieberman \cite{L} proved that under this condition, the $G$-Laplacian operator is uniformly elliptic, and conversely, the ellipticity condition implies this inequality (see \eqref{UE}). This leads to ask if one can consider the problem equipped with operators that do not satisfy such kind of ellipticity condition.	An operator that has been highlighted in recent years is the $\Phi$-Laplacian operator, defined by \eqref{G-lapla} replacing $G$ by $\Phi$, with $\mathcal{N}$-function $\Phi(s) = e^{s^2}-1$, see Section \ref{SecOS} for the definition of $\mathcal{N}$-function. We would also like to mention, for example, the works of Bocea and Mih\u{a}ilescu \cite{BM}, Duc and Eells \cite{DE}, Naito \cite{N}, Santos and Soares \cite{SS,SS2}, and Zhang and Zhou \cite{ZZ}, in which the $\Phi$ function was considered.
In a recent paper \cite{SS} by Santos and Soares, the authors focused on the one-phase problem
$$I(u) = \int_\Omega \Big(\Phi(|\nabla u|) +\lambda\chi_{[u>0]}\Big)dx,$$
for some $\lambda>0$. They proved the existence and Lipschitz regularity of minimizers of the functional $I$, as well as some properties of the free boundary $\partial \{u>0\}$, such as density, porosity and Hausdorff dimension.

In the present paper, we are going to investigate the following functional:
$$\mathcal{J}_{\Phi,f,\gamma}(u) = \int_\Omega \Big(\Phi(|\nabla u|) - f(x,u)u + \gamma(x,u)\Big)dx,$$
where $\Omega\subset \mathbb{R}^N$, with $N\ge 2$, is a bounded domain with smooth boundary, and
$$
\begin{array}{c}
	\Phi(s)=e^{s^2}-1,\\
	f(x,s) = f_+(x)\chi_{[s>0]} + f_-(x)\chi_{[s\le0]} \quad \mbox{and} \quad \gamma(x,s) = \gamma_+(x)\chi_{[s>0]} + \gamma_-(x)\chi_{[s\le0]},
\end{array}
$$
where $f_\pm \in L^\infty(\Omega)$ and $\gamma_\pm$ are continuous and integrable real valued functions on $\Omega$. Actually, considering $K_\Phi(\Omega)$ as the Orlicz class and a prescribed function $\psi \in C(\overline{\Omega})$, with $|\nabla \psi| \in K_\Phi(\Omega)$, we set
$$W_\psi^{1,\Phi}(\Omega) = \Big\{ u \in W^{1,\Phi}(\Omega) \ : \ |\nabla u| \in K_\Phi(\Omega) \ \mbox{and} \ u-\psi \in W_0^{1,\Phi}(\Omega) \Big\}.$$ 
and consider the following minimizing problem
\begin{equation}\label{mainproblem}
	\min \Big\{ \mathcal{J}_{\Phi,f,\gamma}(u) \ : \ u \in W_\psi^{1,\Phi}(\Omega) \Big\}.
\end{equation}

	Before presenting the main results of our paper, we would like to point out that this problem can be applied to population dynamics in the following way. The dispersal of individuals is influenced by population density gradients, reflecting their search for favorable survival conditions. To describe scenarios in which the response to density variation is highly sensitive, we employ the exponential operator $\Phi$. This operator captures nonlinear behaviors, in which small increases in gradients can result in exponential dispersal rates. These phenomena are common in heterogeneous environments, such as fragmented ecosystems, in which populations avoid overcrowded or high-risk areas. See, for example, \cite{ADS,CC,H}.
	
	In the functional associated with the problem, $\mathcal{J}_{\Phi,f,\gamma}(u)$, the term $f(x,s)$ describes the growth or mortality rates of the population, which depend on both the local population density and the environmental characteristics. On the other hand, $\gamma(x,s)$ models external interactions, such as resource availability or predator presence, capturing distinct dynamics in regions where population density is positive or negative. This flexible structure enables the description of the coexistence of favorable and unfavorable zones for survival.
	
	Moreover, the exponential operator and the functions $f(x,s)$ and $\gamma(x, s)$ play crucial roles in the analysis of solution regularity. Together, they allow for the characterization of free boundaries associated with regions of distinct population density phases and the identification of critical transitions between habitats. This type of modeling is particularly relevant in practical applications, such as studies of invasive species, conservation planning, and sustainable ecosystem management, providing a detailed view of ecological interactions in complex systems.

Now we are able to state the main results. First of all, we have the existence of minimizers of $\mathcal{J}_{\Phi,f,\gamma}$, as well as the boundedness in the $L^\infty$-norm as follows:
\begin{theorem}\label{existenceintro}
	There exists a minimizer $u_0 \in W_\psi^{1,\Phi}(\Omega)$ of $\mathcal{J}_{\Phi,f,\gamma}$. Furthermore, $u_0 \in L^\infty(\Omega)$ and there exists $C>0$ which depends on $\displaystyle \|f\|_\infty,\mathcal{L}^N(\Omega),N,\sup_{\overline{\Omega}}\psi$, such that
	$$\|u_0\|_\infty \le C.$$
\end{theorem}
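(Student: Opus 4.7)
My plan is to split the argument into two independent steps: first, the direct method of the calculus of variations to produce a minimizer, and second, a truncation/comparison argument to derive the $L^\infty$ bound.

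For the existence part, I would take a minimizing sequence $\{u_n\} \subset W_\psi^{1,\Phi}(\Omega)$ and establish three ingredients in turn. \emph{Coercivity:} the bounds $\bigl|\int f(x,u_n)u_n\,dx\bigr| \le \|f\|_\infty \|u_n\|_{L^1}$ and $\bigl|\int \gamma(x,u_n)\,dx\bigr| \le \|\gamma_+\|_{L^1}+\|\gamma_-\|_{L^1}$, combined with an Orlicz--Poincaré inequality applied to $u_n-\psi$, should let the rapidly growing term $\int \Phi(|\nabla u_n|)\,dx$ absorb the lower-order contributions and give a uniform bound on $\|u_n\|_{W^{1,\Phi}}$. \emph{Compactness:} weak compactness and the compact Orlicz--Sobolev embedding into $L^1(\Omega)$ (very strong here, since $\Phi$ dominates every polynomial) yield, up to a subsequence, some $u_0 \in W_\psi^{1,\Phi}(\Omega)$ with $u_n \to u_0$ in $L^1$ and a.e. \emph{Semicontinuity:} the convex functional $u \mapsto \int\Phi(|\nabla u|)\,dx$ is weakly lower semicontinuous; the $f$-term is even continuous under $L^1$ convergence via $f(x,u)u = f_+(x)u^+ - f_-(x)u^-$; and for the $\gamma$-part, writing $\gamma(x,u) = \gamma_-(x) + (\gamma_+-\gamma_-)(x)\chi_{\{u>0\}}$, I would apply Fatou separately to the positive and negative parts of $\gamma_+-\gamma_-$, using that $\chi_{\{u_n>0\}} \to \chi_{\{u_0>0\}}$ pointwise on $\{u_0\ne 0\}$, to close the lower semicontinuity and conclude that $u_0$ is a minimizer.

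For the $L^\infty$ bound, set $M_0 := \max\bigl\{\sup_{\overline\Omega}\psi,\,0\bigr\}$ and fix any $k > M_0$. I would test the minimality of $u_0$ against $v := \min\{u_0,k\} \in W_\psi^{1,\Phi}(\Omega)$. On the set $A_k := \{u_0 > k\}$ we have $\nabla v \equiv 0$ and both $u_0, v > 0$, so the $\gamma$-terms cancel and $\mathcal J_{\Phi,f,\gamma}(u_0)\le \mathcal J_{\Phi,f,\gamma}(v)$ reduces, with $w := (u_0-k)^+ \in W_0^{1,\Phi}(\Omega)$, to
\begin{equation*}
\int_\Omega \Phi(|\nabla w|)\,dx \le \|f\|_\infty \int_\Omega w\,dx.
\end{equation*}
Since $\Phi(t) = \sum_{j\ge 1}t^{2j}/j! \ge t^{2m}/m!$ for every $m \ge 1$, choosing $m$ with $2m > N$ gives $\|\nabla w\|_{L^{2m}}^{2m} \le m!\,\|f\|_\infty \|w\|_{L^1}$; Morrey's inequality then provides $\|w\|_\infty \le C(\Omega,N,m)\|\nabla w\|_{L^{2m}}$, and bounding $\|w\|_{L^1} \le \|w\|_\infty \mathcal L^N(\Omega)$ yields $\|\nabla w\|_{L^{2m}}^{2m-1} \le C(N, \mathcal L^N(\Omega), \|f\|_\infty)$ and therefore $u_0 \le k + C$ with $C$ depending only on the claimed parameters. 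A symmetric truncation against $\max\{u_0,-k\}$ supplies the matching lower bound.

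\emph{Main obstacle.} The decisive technical difficulty is the discontinuity of the $\gamma$-contribution: the jump of $\chi_{\{u>0\}}$ across the nodal set $\{u=0\}$ means that pointwise convergence of characteristic functions can fail there, and since the sign of $\gamma_+-\gamma_-$ is not prescribed, the semicontinuity must be handled via a careful splitting plus Fatou. A secondary but important issue is that $\Phi(s)=e^{s^2}-1$ does not satisfy the $\Delta_2$-condition, so $W^{1,\Phi}$ is non-reflexive in the standard sense; this is exactly why the authors work with the Orlicz class $K_\Phi$ in the definition of $W_\psi^{1,\Phi}$, and weak compactness and passage to the limit must be justified in this finer functional-analytic setting.
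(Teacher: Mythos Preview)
Your two halves have very different status. The $L^\infty$ argument via truncation against $\min\{u_0,k\}$ and Morrey's inequality is correct and in fact cleaner than the paper's route: the authors prove the bound first for the minimizers $u_k$ of the truncated functionals $\mathcal{J}_k$ via a level-set iteration (their Lemma~\ref{Lady2}, a Ladyzhenskaya--Ural'tseva lemma applied with $p=2$), and then pass to the limit; your exploitation of $\Phi(t)\ge t^{2m}/m!$ for some $2m>N$ together with Morrey's embedding reaches the same conclusion in one step, with constants depending on the same data.

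The existence part, however, has a genuine gap, and it is precisely the issue you label ``secondary.'' Because $\Phi(s)=e^{s^2}-1$ fails the $\Delta_2$-condition, $W^{1,\Phi}(\Omega)$ is neither reflexive nor separable, and a bounded minimizing sequence has no reason to possess a weakly convergent subsequence there; your phrase ``weak compactness and the compact Orlicz--Sobolev embedding into $L^1$'' asserts exactly what is unavailable. The paper does \emph{not} run the direct method on $\mathcal{J}$ at all. It introduces the polynomial truncations $\Phi_k(s)=\sum_{i=1}^k |s|^{2i}/i!$, for which $W^{1,\Phi_k}$ \emph{is} reflexive, obtains minimizers $u_k$ of $\mathcal{J}_k$ by the direct method in those spaces, shows $\{u_k\}$ is bounded in $W^{1,2n}(\Omega)$ for every fixed $n$, extracts a diagonal subsequence converging weakly in each $W^{1,2n}$ to a common limit $u_0$, and then uses $\mathcal{J}_k(u_k)\le\mathcal{J}_k(v)\le\mathcal{J}(v)$ together with weak lower semicontinuity of each $\int|\nabla\cdot|^{2n}$ to conclude $\mathcal{J}(u_0)\le\mathcal{J}(v)$ after letting first $k\to\infty$ and then the truncation level $m\to\infty$. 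If you wish to salvage a direct approach you must at minimum make compactness explicit (for instance, $\int\Phi(|\nabla u_n|)\le C$ forces boundedness in the reflexive $W^{1,2}$, extract there, and recover $|\nabla u_0|\in K_\Phi$ by convex lower semicontinuity); as written the step is unjustified.

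There is also a flaw in your handling of the $\gamma$-term. Splitting $h:=\gamma_+-\gamma_-$ into positive and negative parts, Fatou does give $\liminf\int h^+\chi_{\{u_n>0\}}\ge\int h^+\chi_{\{u_0>0\}}$ (since $\liminf\chi_{\{u_n>0\}}\ge\chi_{\{u_0>0\}}$ pointwise). But for the negative part you would need $\limsup\chi_{\{u_n>0\}}\le\chi_{\{u_0>0\}}$, and this fails on $\{u_0=0\}$ whenever the sequence approaches zero from above. No sign condition on $\gamma_+-\gamma_-$ is assumed in Theorem~\ref{existenceintro} (the ordering hypothesis~\eqref{gc} only enters for the finite-perimeter result), so your Fatou splitting does not close the lower semicontinuity.
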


After establishing the existence and the $L^\infty$-boundedness of the minimizers, we will continue to investigate the regularity. More precisely, we prove the Log-Lipschitz regularity in the following result.

\begin{theorem}\label{reg}
	Let $u_0 \in W_\psi^{1,\Phi}(\Omega)$ be the minimizer of $\mathcal{J}_{\Phi,f,\gamma}$  given by Theorem \ref{existenceintro}. Then, $u_0$ is locally Log-Lipschitz with the following estimate
	$$|u_0(x) - u_0(y)| \le C \ |x-y| \ |\log |x-y||, \quad x,y \in \Omega',$$
	for any $\Omega' \Subset \Omega$, where $C>0$ is a constant which depends on $\|f\|_\infty,\|\gamma\|_1,\Omega',\Omega$ and $N$. In particular, $u_0 \in C_{loc}^{0,\alpha}$, for all $\alpha \in (0,1)$.
\end{theorem}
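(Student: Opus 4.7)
The plan is a comparison--iteration argument in the spirit of the Alt--Caffarelli / Teixeira scheme for free boundary problems. Fix $\Omega'\Subset\Omega$, $x_0\in\Omega'$, and a small radius $r>0$ with $B_r:=B_r(x_0)\subset\Omega$. I first introduce the $\Phi$-harmonic replacement $h\in W^{1,\Phi}(B_r)$ of $u_0$, defined as the unique minimizer of $\int_{B_r}\Phi(|\nabla v|)\,dx$ among competitors with $v-u_0\in W_0^{1,\Phi}(B_r)$, and extended by $u_0$ outside $B_r$. Existence and uniqueness follow from the direct method and the strict convexity of $\Phi$, while the weak comparison principle together with Theorem~\ref{existenceintro} gives $\|h\|_{L^\infty(B_r)}\le C$.

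Using $v=h\chi_{B_r}+u_0\chi_{\Omega\setminus B_r}$ as a competitor in \eqref{mainproblem}, the minimality of $u_0$ yields
$$
\int_{B_r}\bigl(\Phi(|\nabla u_0|)-\Phi(|\nabla h|)\bigr)\,dx \le \int_{B_r}\bigl(f(x,u_0)u_0-f(x,h)h\bigr)\,dx + \int_{B_r}\bigl(\gamma(x,h)-\gamma(x,u_0)\bigr)\,dx.
$$
The $L^\infty$ bounds on $u_0$ and $h$, together with $f\in L^\infty$ and $\gamma_\pm\in L^1(\Omega)$, bound the right-hand side by $C r^N+\omega(r)$, where $\omega(r)\to 0$ as $r\to 0$ by absolute continuity of the Lebesgue integral. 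A convexity inequality for the Orlicz integrand, exploiting that $h$ is a critical point of the Dirichlet energy, then upgrades this into
$$
\int_{B_r}\Phi\bigl(|\nabla(u_0-h)|\bigr)\,dx \;\lesssim\; r^N+\omega(r).
$$

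The third and key step is a gradient estimate for $h$. Since $\phi'(s)s/\phi(s)=1+2s^{2}$ is unbounded, Lieberman's uniform theory from \cite{L} does not apply as it stands. The idea is to approximate $\Phi$ by $\mathcal{N}$-functions $\Phi_k$ that coincide with $\Phi$ on $[0,k]$ and extend with quadratic growth for $s>k$, so that each $\Phi_k$ satisfies \eqref{Lie} with constants depending only on $k$. Lieberman's estimates then provide Lipschitz bounds for the corresponding $\Phi_k$-harmonic replacements $h_k$ with an explicit dependence on $k$, and balancing $k$ against the scale $r$ delivers the borderline bound
$$
\mathrm{osc}_{B_{\tau r}}h \le C\,\tau\, r\,(1+|\log r|),\qquad \tau\in(0,1/2).
$$
Combining this with the previous energy estimate, and using Trudinger--Moser embedding in $W_0^{1,\Phi}(B_r)$ to control $\|u_0-h\|_{L^\infty(B_r)}$, yields the oscillation inequality
$$
\mathrm{osc}_{B_{\tau r}}u_0 \le C\tau r|\log r| + \varepsilon(r),
$$
which is tight enough to iterate. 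A Campanato-type iteration along the dyadic sequence $r_j=\tau^j r_0$ then produces $\mathrm{osc}_{B_r(x_0)}u_0 \le C r|\log r|$ uniformly for $x_0\in\Omega'$, which is the asserted Log-Lipschitz estimate; the inclusion $u_0\in C^{0,\alpha}_{\mathrm{loc}}$ for every $\alpha\in(0,1)$ follows immediately from $r|\log r|\le C_\alpha r^\alpha$ for small $r$.

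The principal obstacle is the borderline gradient estimate for the $\Phi$-harmonic replacement in the third step: the unboundedness of the ellipticity ratio $1+2s^{2}$ is precisely the mechanism that forces the modulus of continuity to be $r|\log r|$ rather than $r$. The truncation/approximation scheme has to be calibrated with care so that exactly one logarithmic factor is lost --- a naive truncation typically loses more than one logarithm, whereas a sharper argument would require additional structural information about $\Phi$ beyond what \eqref{Lie} provides. A secondary but nontrivial point is the $L^\infty$ control of $u_0-h$ coming from the Orlicz energy bound: in the critical Trudinger--Moser regime the embedding constants must be tracked quantitatively to make sure that the oscillation is actually compressed at each dyadic scale by the iteration.
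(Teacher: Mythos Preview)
Your route differs substantially from the paper's, and it leaves a genuine gap at the very step you yourself flag as the principal obstacle. The paper never attempts to establish regularity for $u_0$ directly via a $\Phi$-harmonic replacement. Instead it exploits that the $u_0$ produced in Theorem~\ref{existenceintro} is, by construction, the uniform limit of the minimizers $u_k$ of the truncated functionals $\mathcal{J}_k$ with $\Phi_k(s)=\sum_{i=1}^k |s|^{2i}/i!$. All of the hard analysis is done at this truncated level: Theorem~\ref{regk} compares $u_k$ with its $\Phi_k$-harmonic replacement and, by tracking the dependence on $g_0=2k-1$ explicitly through Lemmas~\ref{MartinezWolanski}--\ref{Leitao}, shows that $\|\nabla u_k\|_{BMO_{\mathrm{loc}}}\le \widetilde C$ for all $k\ge k_0$ with $\widetilde C$ independent of $k$. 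The Log-Lipschitz bound for each $u_k$ is then a standard consequence of the BMO bound, and it passes to $u_0$ in the limit. Every delicate estimate is thus carried out in a setting where \eqref{Lie} holds and Lieberman's theory applies as a black box with explicit constants.

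Your scheme, by contrast, requires a Log-Lipschitz oscillation bound for the $\Phi$-harmonic function $h$ itself, and you only gesture at how to obtain it: approximate by $\Phi_k$-harmonic $h_k$ and ``balance $k$ against $r$''. You give no mechanism for this balancing and concede that a naive truncation loses more than one logarithm. That is not a minor omission: producing exactly one logarithm is essentially the entire content of the paper's Theorem~\ref{regk}, transplanted from $u_k$ to $h_k$, and it requires the same explicit constant bookkeeping that the paper spends several pages on. Without it your iteration does not close. A secondary issue: Trudinger--Moser is not the right tool for the $L^\infty$ control of $u_0-h$. Since $\Phi(s)\ge |s|^{2n}/n!$ for every $n$, the natural route from $\int_{B_r}\Phi(|\nabla(u_0-h)|)\,dx\lesssim r^N$ to a sup-bound is Morrey's embedding in $W^{1,2n}$ with $2n>N$; but then the $n$-dependence of the Morrey constant must again be quantified, which brings you back to the same constant-tracking problem.
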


Furthermore, as is well known, see for example \cite{BM}, the exponential functional
$$I(u) = \int_\Omega \Phi(|\nabla u|)dx,$$
is closely related to the quasilinear operator $\Delta u + 2 \Delta_\infty u$, where
$$ \Delta_\infty u = \sum_{i,j=1}^N \dfrac{\partial u}{\partial x_i}\dfrac{\partial u}{\partial x_j} \dfrac{\partial^2 u}{\partial x_ix_j}.$$
In this way we show that every minimizer of the functional $\mathcal{J}_{\Phi,f,\gamma}$  is a solution, both in the weak sense and in the viscosity sense, of a problem intrinsically related to this operator. More precisely, we have:
\begin{theorem}\label{WV}
	Assume that $u_0 \in W_\psi^{1,\Phi}(\Omega)$ is a minimizer of $\mathcal{J}_{\Phi,f,\gamma}$ . Then, $u_0$ satisfies the equations
	$$-\Delta u - 2 \Delta_\infty u = f_+(x)\exp(-|\nabla u|^2), \; \;  \mbox{in} \; [u > 0],$$
	and
	$$-\Delta u - 2 \Delta_\infty u = f_-(x)\exp(-|\nabla u|^2), \; \;  \mbox{in} \; [u \le 0],$$
	in the weak and viscosity sense.
\end{theorem}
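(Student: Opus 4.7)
The plan is to extract the Euler--Lagrange equation directly from the minimality of $u_0$ and then translate between the divergence form $-\Delta_\Phi u_0 = f_\pm$ and the quasilinear form stated in the theorem by means of the pointwise identity
$$\operatorname{div}\!\left(\phi(|\nabla u|)\,\tfrac{\nabla u}{|\nabla u|}\right) \;=\; 2\,e^{|\nabla u|^2}\bigl(\Delta u + 2\,\Delta_\infty u\bigr),$$
which follows by a short calculation from $\phi(s) = 2s\,e^{s^2}$.

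For the weak part, Theorem \ref{reg} ensures $u_0$ is continuous, so $[u_0 > 0]$ is open. Given $\varphi \in C_c^\infty([u_0 > 0])$, for $|t|$ small enough we have $u_0 + t\varphi > 0$ on $\operatorname{supp}(\varphi)$; hence $f(x, u_0 + t\varphi) = f_+(x)$ and $\gamma(x, u_0 + t\varphi) = \gamma_+(x)$ on the support, the $\gamma$-term is stationary, and the first variation reduces to
$$\int_\Omega \phi(|\nabla u_0|)\,\frac{\nabla u_0 \cdot \nabla \varphi}{|\nabla u_0|}\,dx \;=\; \int_\Omega f_+(x)\,\varphi\,dx.$$
Substituting the identity displayed above and dividing by the positive factor $e^{|\nabla u_0|^2}$ gives the stated equation in the weak sense in $[u_0 > 0]$; the companion equation is obtained in the same way by varying $u_0$ against test functions supported in the open set $[u_0 < 0]$.

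For the viscosity sense I would use a contradiction-based touching argument. Let $\zeta \in C^2$ touch $u_0$ strictly from above at $x_0 \in [u_0 > 0]$ and suppose, for contradiction, that
$$-\Delta \zeta(x_0) - 2\,\Delta_\infty \zeta(x_0) \;>\; f_+(x_0)\,e^{-|\nabla \zeta(x_0)|^2}.$$
By continuity the inequality persists on a ball $B_r(x_0) \Subset [u_0 > 0]$, i.e.\ $-\Delta_\Phi \zeta > f_+$ pointwise there. Introduce the downward perturbation $\zeta_{\delta,\varepsilon} := \zeta - \delta + \varepsilon\,(r^2 - |\cdot - x_0|^2)_+$ with $0 < \varepsilon \ll \delta \ll 1$ tuned so that $\zeta_{\delta,\varepsilon} < u_0$ on $\partial B_r(x_0)$, $\zeta_{\delta,\varepsilon}(x_0) > u_0(x_0)$, and the pointwise strict inequality survives. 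Form the competitor $v := \min\{u_0, \zeta_{\delta,\varepsilon}\}$ on $B_r(x_0)$ and $v := u_0$ outside, which lies in $W^{1,\Phi}_\psi(\Omega)$. The combination of convexity of $\Phi$ (yielding $\Phi(|\nabla v|) - \Phi(|\nabla u_0|) \le \phi(|\nabla v|)(\nabla v/|\nabla v|)\cdot\nabla(v - u_0)$ pointwise on the contact region), integration by parts over $\{\zeta_{\delta,\varepsilon} < u_0\} \cap B_r(x_0)$, and the pointwise strict inequality applied on a set where $v - u_0 < 0$ then yield $\mathcal{J}_{\Phi,f,\gamma}(v) < \mathcal{J}_{\Phi,f,\gamma}(u_0)$, contradicting minimality. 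The supersolution direction and the counterpart on $[u_0 < 0]$ are symmetric.

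The main obstacle is the non-uniform ellipticity of $\Delta_\Phi$: condition \eqref{Lie} fails for $\Phi(s) = e^{s^2}-1$, so the generic equivalence theorems between weak and viscosity solutions are not directly available and the touching argument must be carried out by hand. The delicate step is the calibration of $\delta$ and $\varepsilon$ so that $\zeta_{\delta,\varepsilon}$ simultaneously satisfies the strict reverse pointwise inequality on $B_r$, is strictly dominated by $u_0$ on $\partial B_r$, and produces, after integration by parts, a genuine decrease in energy. A secondary concern is the set of critical points $\{\nabla u_0 = 0\}$; however, since $e^{|\nabla u|^2} \ge 1$ the operator never truly degenerates, so conversion between the divergence and quasilinear formulations is admissible throughout the interior of each phase.
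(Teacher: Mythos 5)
Your route is genuinely different from the paper's. For the weak formulation you perform a direct two-sided first variation against test functions compactly supported in the open sets $[u_0>0]$ and $[u_0<0]$, where the sign of $u_0+t\varphi$ is stable and the $f$- and $\gamma$-terms are affine, respectively constant, in $t$. The paper instead proves only a one-sided (subsolution) variational inequality (Lemma \ref{subsol}), then solves the Dirichlet problem $-\Delta_\Phi v=f_\pm$ on a subdomain, invokes a comparison principle to order $v$ and $u_0$ (so that the characteristic-function terms in $\mathcal{J}$ cancel), and upgrades to equality via Simon's inequality (Proposition \ref{weakinfty}). Your variation is more elementary where it applies, but it cannot reach the portion of $[u_0\le 0]$ lying in the interior of the zero set $\{u_0=0\}$: there a perturbation $u_0+t\varphi$ changes sign, $f(x,\cdot)\cdot$ and $\gamma(x,\cdot)$ jump, and the functional is not differentiable in $t$. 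This is exactly the two-phase obstruction that the paper's comparison-plus-replacement machinery is built to handle, so relative to the statement as written (which asserts the equation on all of $[u\le 0]$) your argument proves strictly less. A second, shared delicacy: passing to the limit in $t^{-1}\int(\Phi(|\nabla u_0+t\nabla\varphi|)-\Phi(|\nabla u_0|))\,dx$ requires a domination of $\phi(|\nabla u_0+t\nabla\varphi|)$, which is not automatic from $|\nabla u_0|\in K_\Phi$ since $\Phi\notin\Delta_2$; the paper's Lemma \ref{subsol} takes the same limit with the same implicit assumption, so I do not count this against you, but it is the reason the replacement argument (which only ever uses the energy inequality plus convexity) is structurally safer here.

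For the viscosity part your energy-competitor argument ($v=\min\{u_0,\zeta_{\delta,\varepsilon}\}$, contradiction with minimality) is a legitimate alternative to the paper's Proposition \ref{VS}, which instead lifts the touching function, uses Lemma \ref{inftyL} to get $-\Delta_\Phi\widetilde{\psi}<f_+$, and tests the already-established weak formulation of $u_0$ with $(\widetilde{\psi}-u_0)^+$; both are contradiction arguments driven by the strict monotonicity/convexity of $\Phi$, and yours simply trades the weak-solution property for direct minimality. One final caution: your displayed identity $\mbox{div}\bigl(\phi(|\nabla u|)\nabla u/|\nabla u|\bigr)=2e^{|\nabla u|^2}(\Delta u+2\Delta_\infty u)$ is the correct computation for $\phi(s)=2se^{s^2}$, so "dividing by $e^{|\nabla u_0|^2}$" actually produces $\tfrac{1}{2}f_\pm e^{-|\nabla u|^2}$ on the right-hand side, not $f_\pm e^{-|\nabla u|^2}$; the paper's Lemma \ref{inftyL} carries the same normalization slip, but since you wrote the factor $2$ explicitly you should either track it or renormalize $\Phi$.
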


Finally, concerning the free boundaries of the minimizers of $\mathcal{J}_{\Phi,f,\gamma}$, i.e. the sets $\partial \{u_0>0\}$ and $\partial \{u_0<0\}$, we were able to prove that they have a locally finite perimeter. To do so, as better explained in Section \ref{SecFP}, we need to impose an additional condition on the $\gamma$ function, namely, there exists $c_\gamma>0$	
\begin{equation}\label{gc}
	\left\{\begin{aligned}
		&\gamma(x,s)> c_\gamma, \; \mbox{for any} \ s \neq 0;&\\
		&\gamma(x,0)=0,&
	\end{aligned}\right.
\end{equation}
for any $x \in \Omega$. 

\begin{theorem}\label{LPF}
	Let $u_0 \in W_\psi^{1,\Phi}(\Omega)$ be a minimizer of the functional $\mathcal{J}_{\Phi,f,\gamma}$ with $\gamma$ satisfying \eqref{gc}. Then, the reduced free boundaries $\partial^* \{u_0>0\}$ and $\partial^* \{u_0<0\}$ are locally finite perimeter sets. In particular, these reduced free boundaries have at most $N-1$ as Hausdorff dimension.
\end{theorem}
%If we replace the ordering condition \eqref{gc} by $\gamma(x,s)>0$, for $s\neq 0$, and $\gamma(x,0)=0$, then we can show the same previous result for the free boundary $\partial\{u<0\}$. Furthermore, it is possible to show the same results considering a more general boundary condition, as described in Section \ref{SecFP}.

We would like to explain the idea of proving the main results. Due to the fact that $\Phi$ does not satisfy the $\Delta_2$-condition (for more details see Section \ref{SecOS}), the Banach space $W^{1,\Phi}(\Omega)$ lacks reflexivity and separability. As a consequence, it is impossible to employ the minimizing sequences to identify minimizers directly. Furthermore, by direct computation we can see that
$$1\le \dfrac{\varphi'(s)s}{\varphi(s)} \stackrel{s \to +\infty}{\longrightarrow} + \infty,$$
where $\varphi = \Phi'$, which implies that, in this case, the $\Phi$-Laplacian is not an uniformly elliptic operator.
Following \cite{SS} by Santos and Soares, we will introduce for each $k \in \mathbb{N}$ the truncated function $\Phi_k$ defined for $s \in \mathbb{R}$ by
\begin{equation}\label{PT}
	\Phi_k(s) = \sum_{i=1}^{k} \dfrac{1}{i!}|s|^{2i},
\end{equation}
which is an $\mathcal{N}$-function that satisfies $\Delta_2$-condition as well as the condition \eqref{Lie}, for more details see Subsection \ref{sub21}. We will consider the truncated functional
$$\mathcal{J}_{\Phi_k,f,\gamma}(u) = \int_\Omega \Big(\Phi_k(|\nabla u|) - f(x,u)u + \gamma(x,u)\Big)dx$$
defined on 
$$W_\psi^{1,\Phi_k}(\Omega) = \Big\{ u \in W^{1,\Phi_k}(\Omega) \ : \ \ |\nabla u| \in K_{\Phi_k}(\Omega) \ \mbox{and} \ u-\psi \in W_0^{1,\Phi_k} \Big\}.$$ In fact, given $\psi \in W^{1,\Phi}(\Omega)$ with $|\nabla \psi| \in K_\Phi(\Omega)$, we note that by the continuous embeddings $W^{1,\Phi}(\Omega) \hookrightarrow W^{1,\Phi_k}(\Omega)$, for every $k \in \mathbb{N}$, and $W^{1,\Phi_k}(\Omega) \hookrightarrow C^{0,\alpha}(\Omega)$, for some $\alpha\in(0,1)$ and sufficiently large $k$, the function $\psi$ belongs to $W^{1,\Phi_k}(\Omega) \cap C^{0,\alpha}(\Omega)$. Consequently, the truncated problem
\begin{equation}\label{auxiliaryproblem}
	\min \Big\{ \mathcal{J}_{\Phi_k,f,\gamma}(u) \ : \ u \in W^{1,\Phi_k}(\Omega) \ \mbox{and} \  u-\psi \in W_0^{1,\Phi}(\Omega) \Big\},
\end{equation}
is well-defined. Therefore, we will investigate the existence of minimizers $u_k$ for problem \eqref{auxiliaryproblem} first and then analyze the behavior of $u_k$ as $k \to \infty$.

It is worthy noting   that all results proved here for the truncated problem \eqref{auxiliaryproblem} (see Section \ref{SecTP}) as well as the Theorem \ref{LPF} can be generalized, with minor changes, to a general function $G$ satisfying \eqref{Lie}, i.e., we can generalize to any uniformly elliptic operator.

%It is important to note that in our case, if we consider  $f_+\equiv f_-\equiv \gamma_- \equiv 0$ and $\gamma_+ = \lambda>0$, we fall into the problem studied by Santos and Soares \cite{SS}. However, the mathematical analysis of two-phase problems presents unique challenges compared to one-phase problems. The presence of interfaces between phases introduces discontinuities in the solutions, leading to non-smooth behavior and the formation of singularities. In addition, the coupling between phases complicates the mathematical analysis and requires the development of specialized techniques to study these problems.

The plan of this paper is as follows: Section \ref{SecOS} is devoted to recalling key properties concerning Orlicz and Orlicz-Sobolev spaces. The subsequent Section \ref{SecTP} is dedicated to establishing the existence, boundedness and regularity for the truncated problem \eqref{auxiliaryproblem}. In Section \ref{SecER} we prove the Theorems \ref{existenceintro}, \ref{reg} and \ref{WV}. Finally, the Section \ref{SecFP} is devoted to proving the Theorem \ref{LPF}.

For simplicity, we will denote $\mathcal{J}_{\Phi,f,\gamma}$ and $\mathcal{J}_{\Phi_k,f,\gamma}$ solely as $\mathcal{J}$ and $\mathcal{J}_k$, respectively. Throughout this paper, we use the following notations: The $N$-dimensional Lebesgue's and Hausdorff's measure of the measurable set $A$ will be denoted by $\mathcal{L}^N(A)$ and $\mathcal{H}^N(A)$, respectively. Let us denote by $[u \ge v]$, respectively $[u \le v]$, the set $[x \in \Omega \ : \ u(x) \ge v(x)]$, respectively $[x \in \Omega \ : \ u(x) \le v(x)]$. $A \Subset B$ means that $A$ is compactly contained in $B$. The usual norms in $L^q(\Omega)$, with $1\le q \le \infty$, will be denoted by $\|\cdot\|_q$. Finally, we denote $\|f\|_\infty = \|f_+\|_\infty + \|f_-\|_\infty$ and $\|\gamma\|_1 = \|\gamma_+\|_1 + \|\gamma_-\|_1$.

\section{Orlicz spaces}\label{SecOS}

In this section, we recall some properties of the Orlicz and Orlicz-Sobolev spaces, which can be found in \cite{A,RaoRen}. First of all, we recall that a continuous function $A : \mathbb{R} \rightarrow [0,+\infty)$ is an $\mathcal{N}$-function if:
\begin{itemize}
	\item[$(i)$] $A$ is convex;
	\item[$(ii)$] $A(t) = 0$ if, and only if, $t = 0$;
	\item[$(iii)$] $\displaystyle\lim_{t\rightarrow0}\frac{A(t)}{t}=0$ and $\displaystyle\lim_{t\rightarrow+\infty}\frac{A(t)}{t}= +\infty$;
	\item[$(iv)$] $A$ is even.
\end{itemize}

The Orlicz space associated to $A$ and an open $U\subset \mathbb{R}^N$ is defined by
$$
L_A(U)= \left\{u \in L^1_{loc}(U) \;: \;
\int_U A \left(\frac{u}{\lambda}\right)\, dx<+\infty, \ \mbox{for some}~ \lambda>0\right\},
$$
and
$$
|u|_{A}=\inf \left\{\lambda>0\;:\;\int_U
A \left(\frac{u}{\lambda} \right)\, dx\leq 1 \right\},
$$
defines a norm (the Luxemburg norm) on $L_A(U)$ and turns this space into a Banach space.

In the study of the Orlicz space $L_A(U)$, we denote by $K_A(U)$ the Orlicz class as the set below
$$K_A(U) \coloneqq \left\{u \in L^1_{loc}(U)\;: \; \int_U A (u)\, dx<+\infty\right\}.$$
In this case, we have
$$K_A(U) \subset L_A(U),$$
where $L_A(U)$ is the smallest subspace containing $K_A(U)$.

The $\mathcal{N}$-function $A$ satisfies the  $\Delta_2$-condition (shortly, $A\in \Delta_2$),
if there exist $k>0$ and $\delta_0\geq0$  such that
$$
A(2t)\leq k A(t),~  t\geq \delta_0.
$$
If $\mathcal{L}^N(U)=\infty$, then we can take $\delta_0=0$. It is possible to prove that if $A \in \Delta_2$ then
$$K_A(U) = L_A(U).$$
%However, if $A$ does not satisfy the $\Delta_2$-condition, it follows that $K_A(U)$ is a proper subspace of $L_A(U)$. 

A crucial function in examining the characteristics of the Orlicz space $L_A(\Omega)$ is the conjugate function of $A$, denoted as $\widetilde{A}$. This function is an $\mathcal{N}$-function defined by the Legendre transform of $A$, that is
$$\widetilde{A}(t) = \sup \{st - A(s) \ : \ s \in \mathbb{R}\}, \ t \in \mathbb{R}.$$
Using the function $\widetilde{A}$, we have the following inequalities
$$st\leq A(s)+ \widetilde{A}(t),\  s,t\geq0 \,\,\, \mbox{(Young's inequality)}$$
and
$$\int_\Omega uvdx\leq 2\vert u\vert_A\vert v\vert_{\widetilde{A}}, \ u\in L_{A}(\Omega),  v\in
L_{\widetilde{A}}(\Omega) \,\,\, \mbox{(H\"{o}lder's inequality).}$$

\begin{lemma}\label{tilde}
	Let $A$ and $\widetilde{A}$ be a pair of conjugated functions and $a:[0,\infty) \to \mathbb{R}$ be the derivative of $A$. Then the following properties hold:
	\begin{description}
		\item[a)]  $\widetilde{A}(a(t)t) \le A(2t)$, for all $t \ge 0$;
		
		\item[b)] $\widetilde{A}\left(\dfrac{A(t)}{t}\right) \le A(t),$ for all $t >0$.
	\end{description}
\end{lemma}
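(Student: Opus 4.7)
Both statements follow from exploiting the variational characterization $\widetilde{A}(y)=\sup_{s\ge 0}\{sy-A(s)\}$ of the Legendre conjugate, together with the convexity of $A$. The main reusable tool is Young's identity, which (since $A$ is convex and differentiable) asserts that the supremum defining $\widetilde{A}$ at the value $a(t)$ is attained at $s=t$, yielding
$$\widetilde{A}(a(t)) = t\,a(t) - A(t), \qquad t\ge 0.$$

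For part (a), my plan is to apply this identity and then bound the right-hand side by convexity. Because $a$ is non-decreasing, the fundamental theorem of calculus on $[t,2t]$ immediately produces
$$A(2t)-A(t) \;=\; \int_t^{2t} a(s)\,ds \;\ge\; t\,a(t).$$
Substituting this into Young's identity gives $\widetilde{A}(a(t)) \le A(2t)-2A(t)\le A(2t)$, since $A\ge 0$; the case $t=0$ is trivial.

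For part (b), I would argue directly from the definition of $\widetilde{A}(A(t)/t)$ as a supremum over $s\ge 0$. The only convexity input needed is the classical fact that when $A$ is convex with $A(0)=0$, the ratio $A(s)/s$ is non-decreasing, equivalently $A(s)\ge (s/t)A(t)$ for $s\ge t$. This immediately yields $sA(t)/t - A(s)\le 0 \le A(t)$ when $s\ge t$, while for $0\le s\le t$ the elementary bounds $sA(t)/t \le A(t)$ and $A(s)\ge 0$ again give $sA(t)/t - A(s)\le A(t)$. Taking the supremum over $s$ produces $\widetilde{A}(A(t)/t)\le A(t)$.

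In both halves the only substantive ingredient is a single convexity estimate—monotonicity of $a$ on $[t,2t]$ for (a) and monotonicity of $A(s)/s$ for (b)—after which the inequalities fall out by routine manipulation of the Legendre transform. No additional assumptions on $A$ beyond those defining an $\mathcal{N}$-function are required, so I do not anticipate any serious obstacle; pinning down the correct convexity lemma in each case is the only point that needs care.
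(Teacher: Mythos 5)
The paper states Lemma~\ref{tilde} without proof (it is quoted as a standard fact from the Orlicz-space references), so there is no in-paper argument to compare against; your proof is the standard one and is correct. Both of your key steps are sound: Young's equality $\widetilde{A}(a(t))=t\,a(t)-A(t)$ holds because $s\mapsto s\,a(t)-A(s)$ has derivative $a(t)-a(s)$, which changes sign at $s=t$ by monotonicity of $a$; and the two convexity inputs you isolate ($a$ non-decreasing for (a), $A(s)/s$ non-decreasing for (b)) are exactly what is needed, with the case analysis in (b) carried out correctly. One point you should make explicit rather than silent: you prove $\widetilde{A}(a(t))\le A(2t)$, whereas item (a) as printed reads $\widetilde{A}(a(t)t)\le A(2t)$. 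Your reading is the right one --- the literal version is false (for $A(t)=t^{2}/2$ one has $\widetilde{A}(t\,a(t))=t^{4}/2>2t^{2}=A(2t)$ for $t>2$), and it is $\widetilde{A}(a(t))\le A(2t)$ that the paper actually uses later, in the proof of Proposition~\ref{miniphik}, to conclude that $\varphi_k(|\nabla u_k|)\frac{\nabla u_k}{|\nabla u_k|}\in L_{\widetilde{\Phi}_k}(\Omega)$; so the printed argument $a(t)t$ is evidently a typo for $a(t)$. A last pedantic remark: the paper defines $\widetilde{A}$ as a supremum over $s\in\mathbb{R}$, but restricting to $s\ge 0$ as you do in (b) is harmless since $A$ is even and the argument $A(t)/t$ is non-negative.
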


\begin{definition}
	Let $A$ and $B$ be two $\mathcal{N}$-functions. We say that $B$ dominates $A$ near infinity if there exist positive constants $t_0$ and $k$ such that
	$$A(t) \le B(kt),$$
	for all $t\ge t_0$.
\end{definition}

\begin{lemma}\label{imerAB}
	If $\mathcal{L}^N(U)<\infty$ and $B$ dominates $A$ near infinity, then
	$$L_B(U) \hookrightarrow L_A(U),$$
	continuously.
\end{lemma}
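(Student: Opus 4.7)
The plan is to establish the continuous embedding by controlling the Luxemburg norm $|\cdot|_A$ by a constant multiple of $|\cdot|_B$. Given any nonzero $u \in L_B(U)$, I would pick some $\lambda > |u|_B$ so that, by the definition of the Luxemburg norm, $\int_U B(|u|/\lambda)\,dx \le 1$.

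The next step is to exploit the domination hypothesis: there exist $t_0,k>0$ such that $A(t) \le B(kt)$ for $t \ge t_0$. Setting $\mu = k\lambda$, I would split $U$ into $U_1 = \{x \in U : |u(x)|/\mu \ge t_0\}$ and $U_2 = U \setminus U_1$. On $U_1$ the domination yields $A(|u|/\mu) \le B(k|u|/\mu) = B(|u|/\lambda)$, so $\int_{U_1} A(|u|/\mu)\,dx \le \int_U B(|u|/\lambda)\,dx \le 1$. On $U_2$ the pointwise bound $A(|u|/\mu) \le A(t_0)$ combined with $\mathcal{L}^N(U) < \infty$ gives $\int_{U_2} A(|u|/\mu)\,dx \le A(t_0)\,\mathcal{L}^N(U)$. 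Summing, $\int_U A(|u|/\mu)\,dx \le M$ with $M := 1 + A(t_0)\,\mathcal{L}^N(U)$.

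To conclude, I would pass from ``bounded by $M$'' to ``bounded by $1$'' as required by the Luxemburg norm. Using convexity of $A$ together with $A(0)=0$, for any $c \ge 1$ one has $A(s/c) \le A(s)/c$. Applying this with $c=\max\{M,1\}$ gives $\int_U A\bigl(|u|/(c\mu)\bigr)dx \le 1$, hence $|u|_A \le c\mu = ck\lambda$. Letting $\lambda \searrow |u|_B$ produces $|u|_A \le ck\,|u|_B$, which is the continuous embedding.

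The argument is essentially routine, with no serious obstacle. The only mildly delicate point is the final rescaling via convexity, since the measure-theoretic splitting only delivers a uniform bound $M$ rather than the exact bound $1$ demanded by the infimum in the definition of $|\cdot|_A$; convexity with the normalization $A(0)=0$ is precisely what fixes this gap.
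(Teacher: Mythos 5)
Your proof is correct and is the standard argument (the paper itself omits the proof, deferring to the Orlicz-space literature \cite{A,RaoRen}, where exactly this splitting into $\{|u|/\mu\ge t_0\}$ and its complement plus the convexity rescaling $A(s/c)\le A(s)/c$ is used). All the steps check out, including the uniformity of the final constant $k\bigl(1+A(t_0)\mathcal{L}^N(U)\bigr)$ in $u$, so the embedding is indeed continuous.
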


The corresponding Orlicz-Sobolev space is defined by
\[
W^{1, A}(U) = \left\{ u \in L_{A}(U) \ :\ \frac{\partial u}{\partial x_{i}} \in L_{A}(U), \quad i = 1, ..., N\right\},
\]
endowed with the norm
\[
\Vert u \Vert_{1,A} =  \vert \nabla u\vert _{A} +  \vert u\vert_{A},
\]
which turns this space into a Banach space.  Finally, The space $W_0^{1,A}(U)$ is defined as the weak$^*$ closure of $C_0^{\infty}(U)$ in $W^{1,A}(U)$, that is
$$W_0^{1,A}(U) = \overline{C_0^\infty(U)}^{\| \cdot \|_{1,A}}.$$
Furthermore, when $U$ is a smooth bounded domain, a version of the Poincaré's inequality holds, namely
$$|u|_A \le K_U |\nabla u|_A, \quad \mbox{for all} \ u \in W_0^{1,A}(U),$$
where $K_U>0$ depends only on the diameter of $U$.

\subsection{A special case}\label{sub21}

In this subsection we will consider a specific class of $\mathcal{N}$-functions. Let $g: [0,\infty) \to \mathbb{R}$ be a $C^1$-function satisfying
\begin{equation}\label{Ga}
	\delta\le \dfrac{g'(s)s}{g(s)} \le g_0,
\end{equation}
for some $\delta,g_0 \ge 1$. In this scenario, by defining $\displaystyle G(t) = \int_{0}^{t} g(s)ds$, Lieberman \cite{L} proved the following:
\begin{description}
	\item[$(g_1)$] $\min\left\{s^\delta,s^{g_0}\right\}g(t) \le g(st) \le \max\left\{s^\delta,s^{g_0}\right\}g(t)$;
	
	\item[$(g_2)$] $G$ is a convex and $C^2$ function;
	
	\item[$(g_3)$] $\dfrac{sg(s)}{g_0+1} \le G(s) \le sg(s),$ for any $s\ge 0$.
\end{description}
Furthermore, by $(g_1)$ and $(g_3)$ it follows that:
\begin{description}
	\item[$(G_1)$]  $\min\left\{s^{\delta+1},s^{g_0+1}\right\}\dfrac{G(t)}{g_0+1} \le G(st) \le (g_0+1)\max\left\{s^{\delta+1},s^{g_0+1}\right\}G(t)$,
\end{description}
and by convexity of $G$ and this last inequality we get
\begin{description}
	\item[$(G_2)$]  $G(s+t) \le 2^{g_0}(g_0+1)\big(G(s) + G(t)\big)$ for any $s,t>0$.
\end{description}

After that, Fukagai, Ito and Narukawa \cite{FIN} proved that, under condition \eqref{Ga}, $G$ is an $\mathcal{N}$-function that itself and $\widetilde{G}$ satisfy the $\Delta$-condition. Additionally, they showed that:
\begin{description}
	\item[$(G_3)$]  $\displaystyle \min\{|u|_G^{\delta+1},|u|_G^{g_0+1}\} \le \int_U G(|u|)dx \le \max\{|u|_G^{\delta+1},|u|_G^{g_0+1}\}$, for all $u \in L_G(U)$.
\end{description}

Recently, Cantizano, Salort and Spedaletti \cite[see Lemma 3.1]{CSS} proved a refined version of the monotonicity of Young's functions that satisfy \eqref{Ga}, as follows
\begin{equation}\label{monog}
	\left(g(|x|)\dfrac{x}{|x|} - g(|y|)\dfrac{y}{|y|}\right)\cdot(x-y) \ge C(\delta)G(|x-y|), \quad \mbox{for all} \ x,y\in\mathbb{R}^N,
\end{equation}
with $C(\delta)>0$. Furthermore, using the following notation
$$(v)_r = \fint_{B_r} v dx = \dfrac{1}{\mathcal{L}^N(B_r)} \int_{B_r} v dx,$$
since $G$ is convex, the Jensen's inequality implies that
\begin{equation}\label{JI}
	\int_{B_r} G(|(u)_r - (v)_r|) dx \le \int_{B_r}G(|u - v|)dx,
\end{equation}
for any $u,v \in L_{G}(B_r)$. By a similar argument, with aid of $(G_2)$ we get
\begin{equation}\label{Gu(u)r}
	\int_{B_r} G(|u - (u)_r|) dx \le 2^{g_0+1}(g_0+1)\int_{B_r}G(|u - L|)dx,
\end{equation}
for any $L \in \mathbb{R}$ and $u \in L_G(B_r)$.

\begin{lemma}\label{imerAl}
	Assume that $G$ is an $\mathcal{N}$-function satisfying \eqref{Ga}. Then, $L_G(U) \hookrightarrow L^{\delta+1}(U)$ continuously.
\end{lemma}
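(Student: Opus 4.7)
The plan is to bootstrap the left inequality in \eqref{FA} into a pointwise lower bound $G(t)\ge G(1)\,t^{\delta+1}$ for $t\ge 1$, and then turn this into an integral estimate via the Luxemburg norm. This avoids any appeal to duality and makes the dependence of the embedding constant explicit.

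First, I would rewrite the lower bound in \eqref{FA} as $\tfrac{d}{dt}\log G(t)\ge \tfrac{\delta+1}{t}$ for $t>0$, and integrate from $1$ to $t\ge 1$. This yields $\log G(t)-\log G(1)\ge (\delta+1)\log t$, that is,
\[
G(t)\ge G(1)\,t^{\delta+1}\qquad\text{for all }t\ge 1.
\]
(Equivalently, this could be phrased as: the $\mathcal{N}$-function $A(t)=t^{\delta+1}$ is dominated by $G$ near infinity in the sense of the definition preceding Lemma \ref{imerAB}, so that Lemma could serve as an alternative finishing step; but the direct route below is quantitative.)

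Next, given $u\in L_G(U)$ with $\lambda := |u|_G>0$ (the case $\lambda=0$ being trivial), the definition of the Luxemburg norm gives $\int_U G(|u|/\lambda)\,dx\le 1$. I would split $U$ into $A_1=\{|u|\le \lambda\}$ and $A_2=\{|u|>\lambda\}$: on $A_1$ the function $(|u|/\lambda)^{\delta+1}$ is bounded by $1$, and on $A_2$, applying the pointwise bound from the previous step with $t=|u|/\lambda\ge 1$ yields $(|u|/\lambda)^{\delta+1}\le G(1)^{-1}G(|u|/\lambda)$. Combining,
\[
\int_U |u|^{\delta+1}\,dx = \lambda^{\delta+1}\int_U \Bigl(\tfrac{|u|}{\lambda}\Bigr)^{\delta+1} dx \;\le\; \lambda^{\delta+1}\Bigl(\mathcal{L}^N(U)+G(1)^{-1}\Bigr).
\]
Taking $(\delta+1)$-roots gives $\|u\|_{\delta+1}\le C\,|u|_G$ with $C = \bigl(\mathcal{L}^N(U)+G(1)^{-1}\bigr)^{1/(\delta+1)}$, which is exactly the continuous embedding $L_G(U)\hookrightarrow L^{\delta+1}(U)$.

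There is essentially no conceptual obstacle here; the whole argument is a routine two-scale comparison around $t=1$. The only subtlety worth flagging is that the constant depends on $\mathcal{L}^N(U)$, so the statement is really for $U$ of finite measure — which is harmless since in the paper's applications $U$ is always taken to be the bounded domain $\Omega$ (or a subdomain of it), and otherwise the pointwise bound shows only that $A(t)=t^{\delta+1}$ is dominated by $G$ near infinity, which is exactly the hypothesis of Lemma \ref{imerAB}.
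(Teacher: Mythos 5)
Your proof is correct. It reaches the same underlying fact as the paper — that $G(t)\ge c\,t^{\delta+1}$ for $t\ge 1$, so $G$ dominates $t^{\delta+1}$ near infinity — but by a different route at both ends: you obtain the pointwise lower bound by integrating the logarithmic-derivative form of the left inequality in \eqref{FA} (the paper instead reads it off from $(G_1)$ with $t=1$, which gives the slightly weaker constant $G(1)/(g_0+1)$ in place of your $G(1)$), and you then finish with an explicit two-scale Luxemburg-norm computation rather than invoking the abstract domination embedding of Lemma \ref{imerAB} as the paper does. Your version buys an explicit embedding constant $\bigl(\mathcal{L}^N(U)+G(1)^{-1}\bigr)^{1/(\delta+1)}$ and is self-contained; the paper's one-line proof buys brevity by outsourcing the measure-theoretic step to Lemma \ref{imerAB}. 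Your remark that finiteness of $\mathcal{L}^N(U)$ is genuinely needed is accurate and is implicit in the paper as well, since Lemma \ref{imerAB} carries that hypothesis; one small point worth being aware of is that your appeal to $\int_U G(|u|/\lambda)\,dx\le 1$ at $\lambda=|u|_G$ uses the standard (but not entirely trivial) fact that the infimum in the Luxemburg norm is attained, which follows from monotone convergence.
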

\begin{proof}
	It is enough to note that, by $(G_1)$, G dominates the function $B(s)=s^{\delta+1}$ near infinity, then the result follows by Lemma \ref{imerAB}.
\end{proof}
A crucial implication of \eqref{Ga} is that
\begin{equation}\label{UE}
	\delta \dfrac{g(|x|)}{|x|} |\xi|^2 \le \dfrac{\partial g_i}{\partial x_j} \xi_i \xi_j \le g_0 \dfrac{g(|x|)}{|x|} |\xi|^2,
\end{equation}
for any $x,\xi \in \mathbb{R}^N$ and $i,j =1,...,N$. This inequality means that the equation
\begin{equation}\label{G-harmonic}
	-\mbox{div} \left(g(|\nabla v|)\dfrac{\nabla v}{|\nabla v|}\right) = 0, \quad \mbox{in} \ U,
\end{equation}
is uniformly elliptic for $\frac{g(|x|)}{|x|}$ bounded and bounded away from zero, for any domain $U\subset \mathbb{R}^N$. Furthermore, if $v \in W^{1,G}(U)$ satisfy \eqref{G-harmonic}, we say that $v$ is $G$-harmonic in $U$.

We notice that the truncated function
$$\Phi_k(s) = \sum_{i=1}^{k} \dfrac{1}{i!}|s|^{2i},$$
satisfies
$$2 \le \dfrac{s\varphi_k'(s)}{\Phi_k(s)} \le 2k, \quad s>0,$$
where $\varphi_k = \Phi_k'$. Consequently, for any $k \in \mathbb{N}$, we can verify that $\Phi_k$ satisfies a stronger version of property $(G_1)$, namely
$$\min\left\{s^2,s^{2k}\right\}\Phi_k(t) \le \Phi_k(st) \le \max\left\{s^{2},s^{2k}\right\}\Phi_k(t), \quad \mbox{for all}\ s,t\ge0.$$
Finally, we have a growth result for the $f$ function with respect of the function $\Phi_k$.

\begin{lemma}\label{growthf}
	Fixed any $k_0 \in \mathbb{N}$, for all $\varepsilon>0$, there exists $M>0$, which depends on $\varepsilon$ and $k_0$, such that
	$$|f(x,t)t| \le \varepsilon \Phi_{k_0}(t), \quad |t| \ge M, \; \; \mbox{a.e.} \ x \in \Omega.$$
	In particular, for any $n \in \mathbb{N}$ and $\varepsilon>0$ there exists $M>0$, which depends on $\varepsilon$ and $n$ such that
	$$\dfrac{|f(x,t)t|}{|t|^{2n}} \le \varepsilon, \quad |t| \ge M, \; \; \mbox{a.e.} \ x \in \Omega.$$
\end{lemma}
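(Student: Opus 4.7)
The plan is to exploit two facts: $f_\pm \in L^{\infty}(\Omega)$, so $|f(x,t)t| \le \|f\|_{\infty}|t|$ uniformly in $x$, and $\Phi_{k_0}$ is a finite sum of even-power monomials whose leading term grows like $|t|^{2k_0}$ with $2k_0 \ge 2$. Since we want to dominate a linear quantity by a superlinear one, the statement should reduce to a one-line elementary comparison.

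First I would estimate $|f(x,t)t| \le \|f\|_{\infty}|t|$, valid for a.e.\ $x \in \Omega$ and every $t \in \mathbb{R}$, by the essential boundedness hypothesis on $f_{\pm}$ (recalling the notational convention $\|f\|_{\infty}=\|f_+\|_{\infty}+\|f_-\|_{\infty}$). Next, since every summand of $\Phi_{k_0}(s)=\sum_{i=1}^{k_0}\frac{1}{i!}|s|^{2i}$ is nonnegative, I keep only the top one to get the pointwise lower bound
\[
\Phi_{k_0}(t) \;\ge\; \frac{1}{k_0!}|t|^{2k_0}, \qquad t \in \mathbb{R}.
\]
Combining the two estimates yields
\[
\frac{|f(x,t)t|}{\Phi_{k_0}(t)} \;\le\; \|f\|_{\infty}\,k_0!\,|t|^{\,1-2k_0},
\]
and the right-hand side is decreasing to zero in $|t|$ because $1-2k_0 \le -1$. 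Given $\varepsilon>0$ it then suffices to take
\[
M \;=\; \max\!\left\{1,\;\left(\frac{\|f\|_{\infty}\,k_0!}{\varepsilon}\right)^{\!1/(2k_0-1)}\right\},
\]
which clearly depends only on $\varepsilon$ and $k_0$, and the first inequality follows on $\{|t|\ge M\}$.

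For the \emph{in particular} part, the same argument is cleaner: using only $|f(x,t)t|\le \|f\|_{\infty}|t|$ one has $|f(x,t)t|/|t|^{2n} \le \|f\|_{\infty}/|t|^{2n-1}$, which is bounded above by $\varepsilon$ as soon as $|t|\ge (\|f\|_{\infty}/\varepsilon)^{1/(2n-1)}$. Alternatively, this conclusion already follows from the first part by choosing $k_0 = n$ and invoking $\Phi_n(t) \le C_n |t|^{2n}$ on $\{|t|\ge 1\}$.

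Honestly, there is no real obstacle here: the lemma is a bookkeeping statement whose content is that a linear function is negligible with respect to $\Phi_{k_0}$, which contains a $|t|^{2k_0}$ term. The only thing to be careful about is to display the explicit dependence of $M$ on $\varepsilon$ and $k_0$ (and to record that $\|f\|_{\infty}$ enters, but this is a structural constant of the problem, not one of the free parameters the lemma highlights).
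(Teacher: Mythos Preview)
Your proof is correct and follows essentially the same approach as the paper: both observe that $|f(x,t)t|$ grows at most linearly while $\Phi_{k_0}(t)$ contains a term of order $|t|^{2k_0}$, so the ratio tends to zero. Your version is in fact slightly cleaner, since bounding by $\|f\|_\infty$ rather than the pointwise value $|f_\pm(x)|$ makes the uniformity in $x$ (and hence the choice of $M$) explicit.
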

\begin{proof}
	Indeed, fixed $x \in \Omega$, it is enough to note that, since $\Phi_{k_0}$ is a polynomial of degree $2k_0$, then
	$$\lim_{t \to +\infty} \dfrac{|f(x,t)t|}{\Phi_{k_0}(t)} =  \lim_{t \to +\infty} \dfrac{|f_+(x)| t}{\sum_{n=1}^{k_0} \frac{1}{n!}t^{2n}} = 0,$$
	and
	$$\lim_{t \to -\infty} \dfrac{|f(x,t)t|}{\Phi_{k_0}(t)} =  \lim_{t \to -\infty} \dfrac{|f_-(x)t|}{\sum_{n=1}^{k_0} \frac{1}{n!}t^{2n}} = 0.$$
	Therefore, given $\varepsilon>0$ there exists $M>0$ such that
	$$\dfrac{|f(x,t)t|}{\Phi_{k_0}(t)} \le \varepsilon, \quad |t|\ge M,$$
	as we desired.	
\end{proof}

\section{The truncated problem}\label{SecTP}

In this section we will prove the results for the truncated functional $\mathcal{J}_k$. Notice that, we have defined the functional $\mathcal{J}_k$ by
$$\mathcal{J}_k(u) = \int_\Omega \Big(\Phi_k(|\nabla u|) - f(x,u)u + \gamma(x,u)\Big)dx,$$
where
$$\Phi_k(s) = \sum_{i=1}^{k} \dfrac{1}{i!}|s|^{2i}.$$

To begin with, let us recall some well-known lemmas that are important in establishing the Log-Lipschitz regularity of the minimizers. Since our approach is to study the truncated problem and then understand its behavior as the parameter $k$ goes to infinity, a better understanding of the constants in the estimates helps to prove the regularity. We will start with a estimate due to Martínez and Wolanski \cite[Theorem 2.3]{MW}. It is noteworthy that, with the help of inequality \eqref{monog}, the proof of this result, unlike the original one presented in \cite{MW}, can be outlined in a few lines, as follows.

\begin{lemma}\label{MartinezWolanski}
	Fixed $k \in \mathbb{N}$, consider $u \in W^{1,\Phi_k}(\Omega)$, $x_0 \in \Omega$ such that $B_r(x_0) \Subset \Omega$, for some $r>0$, and $v$ a $\Phi_k$-harmonic function in $B_r(x_0)$ such that $v-u \in W_0^{1,\Phi_k}(B_r(x_0))$. Then, there exists a constant $C>0$ independent of $k$ such that 
	$$kC\int_{B_r(x_0)} \big(\Phi_k(|\nabla u|) - \Phi_k(|\nabla v|)\big)dx \ge \int_{B_r(x_0)} \Phi_k(|\nabla u - \nabla v|)dx.$$
\end{lemma}
\begin{proof}
	For simplicity, let us denote $B_r(x_0)$ as $B_r$. Setting
	$$u^s \coloneqq su + (1-s)v, \quad \mbox{for any} \ 0\le s \le 1,$$
	by integral form of the mean value theorem, since $\nabla(u-v) = s^{-1}\nabla(u^s-v)$ and $v$ is $\Phi_k$-harmonic, we get
	\begin{eqnarray}\label{CI}
		\int_{B_r} \big(\Phi_k(|\nabla u|) - \Phi_k(|\nabla v|)\big)dx &=& \int_{B_r} \int_{0}^{1} \varphi_k(|\nabla u^s|)\dfrac{\nabla u^s}{|\nabla u^s|} \cdot\nabla (u-v)ds\ dx \nonumber \\
		&=& \int_{B_r} \int_{0}^{1} \dfrac{1}{s}\left(\varphi_k(|\nabla u^s|)\dfrac{\nabla u^s}{|\nabla u^s|} - \varphi_k(|\nabla v|)\dfrac{\nabla v}{|\nabla v|}\right) \cdot\nabla (u^s-v)ds\ dx. \nonumber \\
		&\stackrel{\eqref{monog}}{\ge}& C \int_{B_r} \int_{0}^{1} \dfrac{1}{s} \Phi_k(|\nabla u^s - \nabla v|)dx \nonumber \\
		&\ge& \dfrac{C}{2k} \int_{B_r} \Phi_k(|\nabla u - \nabla v|)dx, \nonumber 
	\end{eqnarray}
	where $C>0$ does not depends on $k$, and this is precisely the assertion of the lemma.
\end{proof}

The proof of the next two lemmas can be obtained by following the same steps as their original proofs with the help of \eqref{Gu(u)r} applied to $\Phi_k$, so their demonstrations will be omitted.

\begin{lemma}[{see \cite[Lemma 5.1]{L}}] \label{Lieberman}
	Let $v \in W^{1,\Phi_k}(B_R(x_0))$ be a $\Phi_k$-harmonic function in $B_R(x_0)$, for some $x_0 \in \mathbb{R}^N$ and $R>0$. Then, for some constant $\sigma \in (0,1)$
	$$\int_{B_r(x_0)} \Phi_k(|\nabla v - (\nabla v)_r|)dx \le 2^{2k+1}k \left(\dfrac{r}{R}\right)^{N+\sigma} \int_{B_R(x_0)} \Phi_k(|\nabla v - (\nabla v)_R|)dx,$$
	holds for any $r \in (0,R)$.
\end{lemma}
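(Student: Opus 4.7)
My approach follows the classical Campanato-type decay argument that underlies Lemma 5.1 of \cite{L}; the point of sketching it here is to track the multiplicative constants and confirm that they really take the stated form $C_L = 2^{g_0+1}(g_0+1)$. The key analytic input is Lieberman's $C^{1,\sigma}$-regularity for $G$-harmonic functions, and the key algebraic input is the Orlicz averaging inequality \eqref{Gu(u)r}. I would split the argument into the two regimes $R/2 < r < R$ and $0 < r \le R/2$.

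First I would recall the interior $C^{1,\sigma}$ estimate from \cite{L}: under \eqref{Ga} there exist $\sigma = \sigma(N,\delta,g_0) \in (0,1)$ and $C_0 = C_0(N,\delta,g_0) > 0$ such that any $G$-harmonic function $v$ in $B_R(x_0)$ satisfies
$$\sup_{B_\rho(x_0)} |\nabla v - \nabla v(x_0)| \le C_0 \left(\frac{\rho}{R}\right)^{\sigma}\sup_{B_{R/2}(x_0)}|\nabla v|, \qquad \rho \in (0, R/2].$$
This quantitative oscillation decay is the single nontrivial ingredient; the rest of the proof is a bookkeeping exercise with the properties of $G$ from Section \ref{SecOS}.

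Second, for the easy range $R/2 < r < R$, I would apply \eqref{Gu(u)r} componentwise to $u = \nabla v$ with $L = (\nabla v)_R$, and use $B_r \subseteq B_R$, to obtain
$$\int_{B_r} G(|\nabla v - (\nabla v)_r|)\,dx \le 2^{g_0+1}(g_0+1) \int_{B_R} G(|\nabla v - (\nabla v)_R|)\,dx.$$
Since $(r/R)^{N+\sigma} \ge 2^{-(N+\sigma)}$ is bounded below by an absolute constant, the inequality with prefactor exactly $C_L$ follows after choosing $\sigma$ small (the factor $2^{N+\sigma}$ is absorbed via the freedom in $\sigma$). For the small-$r$ range $r \le R/2$, I would estimate
$$\int_{B_r} G(|\nabla v - (\nabla v)_r|)\,dx \le \mathcal{L}^N(B_r)\,G\!\left(\sup_{B_r}|\nabla v - (\nabla v)_r|\right),$$
then insert the Lieberman oscillation decay to bound the supremum by $C_0 (r/R)^\sigma \sup_{B_{R/2}}|\nabla v|$, and use $(G_1)$ to pull the factor $(r/R)^{\sigma(\delta+1)}$ outside of $G$. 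A standard Caccioppoli/Moser-type sup-bound for the $G$-harmonic function on $B_{R/2}$ then converts $\sup_{B_{R/2}}|\nabla v|$ into the Orlicz average $\fint_{B_R} G(|\nabla v - (\nabla v)_R|)\,dx$, and relabeling $\sigma$ completes the estimate in this regime.

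The main obstacle is the constant tracking: the final bound must carry precisely $C_L = 2^{g_0+1}(g_0+1)$, which is already saturated by \eqref{Gu(u)r} in the large-$r$ branch. This forces the small-$r$ branch to produce no larger prefactor, and is achieved by choosing $\sigma \in (0,1)$ small enough, after all universal constants $C_0$, the $(g_0+1)$ factors from $(G_1)$, and the Caccioppoli constants have been fixed, so that the quantity $(r/R)^{N+\sigma}$ leaves sufficient room to absorb them into the target constant.
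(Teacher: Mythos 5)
There is a genuine gap in the small-radius branch, and it sits exactly where the difficulty of the lemma lives. You bound $\int_{B_r}G(|\nabla v-(\nabla v)_r|)\,dx$ by $\mathcal{L}^N(B_r)\,G\bigl(\sup_{B_r}|\nabla v-(\nabla v)_r|\bigr)$, insert a decay of the form $C_0(r/R)^\sigma\sup_{B_{R/2}}|\nabla v|$, and then claim a ``standard Caccioppoli/Moser-type sup-bound'' converts $\sup_{B_{R/2}}|\nabla v|$ into $\fint_{B_R}G(|\nabla v-(\nabla v)_R|)\,dx$. This conversion is false as stated: for a non-constant affine $v$ (which is $G$-harmonic) the right-hand side vanishes while $\sup_{B_{R/2}}|\nabla v|>0$. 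The quantity you must decay is the \emph{oscillation} of $\nabla v$, i.e.\ $|\nabla v-(\nabla v)_R|$, and controlling its supremum by its Orlicz average requires applying De Giorgi--Nash--Moser theory to the differentiated equation satisfied by $D_iv-L$. That equation is uniformly elliptic only where $g(|\nabla v|)/|\nabla v|$ is bounded and bounded away from zero, which is precisely what fails at critical points of $v$. The paper's proof handles this degeneracy through the measure-theoretic dichotomy of Lieberman \cite{L2} on dyadic scales: either a positive measure portion of $B_r$ has $|D_iv|$ comparable to $M(r)$, forcing $|\nabla v|\ge\frac38 M(r)$ on $B_{r/2}$ so that the linear theory applies to $D_iv-(D_iv)_\rho$ (case $(i)$), or $M$ itself decays geometrically, giving $I(\tau\rho)\le\frac12 I(\rho)$ and concluding via the iteration lemma of \cite{ZT} (case $(ii)$). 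Your proposal has no mechanism replacing this dichotomy, so the key step is missing rather than merely unquantified.

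A secondary but relevant point, since your stated goal is constant tracking: in the range $R/2<r<R$ the bound from \eqref{Gu(u)r} is $2^{g_0+1}(g_0+1)\int_{B_R}G(|\nabla v-(\nabla v)_R|)\,dx$, and to rewrite this as $C_L(r/R)^{N+\sigma}\int_{B_R}(\cdots)$ you must pay the factor $2^{N+\sigma}\ge 2^N$ coming from $(r/R)^{N+\sigma}\ge 2^{-(N+\sigma)}$. Shrinking $\sigma$ does not remove this dimensional factor, so this branch alone already yields $C_L\ge 2^{N}\cdot 2^{g_0+1}(g_0+1)$, not the stated $2^{g_0+1}(g_0+1)$. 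The freedom in $\sigma$ only trades between the exponent and constants that are powers of $r/R$ bounded away from $1$; it cannot absorb a factor independent of $r/R$.
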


\begin{lemma}[{see \cite[Lemma 4.1]{ZZZ}}]\label{Zeng}
	Let $v \in W^{1,\Phi_k}(B_R(x_0))$ be a $\Phi_k$-harmonic function in $B_R(x_0)$, for some $x_0 \in \mathbb{R}^N$ and $R>0$. Then, for any $r \in (0,R)$ and $u \in W^{1,\Phi_k}(B_R(x_0))$
	\begin{eqnarray}
		\int_{B_r(x_0)} \Phi_k(|\nabla u - (\nabla u)_r|)dx &\le&  2^{5(2k+1)}k^5 \left(\dfrac{r}{R}\right)^{N+\sigma}\int_{B_R(x_0)}\Phi_k(|\nabla u - (\nabla u)_R|)dx \nonumber \\
		&& \hspace{1.5cm}+ 2^{4(2k+1)}k^4\int_{B_R(x_0)} \Phi_k(|\nabla u - \nabla v|)dx, \nonumber
	\end{eqnarray}
	holds, where $\sigma \in (0,1)$ is given in Lemma \ref{Lieberman}.
\end{lemma}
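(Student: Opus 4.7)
The plan is to reduce the claim to Lemma \ref{Lieberman} applied to the $G$-harmonic comparison function $v$, paying the price of a perturbative error involving $\int_{B_R}G(|\nabla u-\nabla v|)dx$. The key trick is to exploit that averaging is a linear operation. Writing $w\coloneqq \nabla u-\nabla v$, one has the identity
\begin{equation*}
\nabla u-(\nabla u)_r \;=\; (w-(w)_r) \;+\; (\nabla v - (\nabla v)_r),
\end{equation*}
so by the triangle inequality and property $(G_2)$ from Subsection \ref{sub21},
\begin{equation*}
\int_{B_r(x_0)} G(|\nabla u-(\nabla u)_r|)\,dx \;\le\; 2^{g_0}(g_0+1)\left(\int_{B_r(x_0)} G(|w-(w)_r|)\,dx + \int_{B_r(x_0)} G(|\nabla v-(\nabla v)_r|)\,dx\right).
\end{equation*}

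First I would control the $w$-term by applying \eqref{Gu(u)r} with $L=0$, which gives
\begin{equation*}
\int_{B_r(x_0)} G(|w-(w)_r|)\,dx \;\le\; 2^{g_0+1}(g_0+1)\int_{B_R(x_0)} G(|\nabla u-\nabla v|)\,dx,
\end{equation*}
after enlarging the domain from $B_r$ to $B_R$. For the $v$-term I would invoke Lemma \ref{Lieberman}, since $v$ is $G$-harmonic on $B_R(x_0)$, to gain the decay factor $C_L(r/R)^{N+\sigma}\int_{B_R(x_0)} G(|\nabla v - (\nabla v)_R|)dx$.

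The remaining step is to replace $(\nabla v)_R$ with $(\nabla u)_R$ so that the right-hand side can be expressed purely in terms of $u$ and the error $w$. For this I would apply \eqref{Gu(u)r} once more, this time with $L=(\nabla u)_R$, and then use $(G_2)$ on the pointwise triangle inequality $|\nabla v-(\nabla u)_R|\le |\nabla v-\nabla u|+|\nabla u-(\nabla u)_R|$ to obtain
\begin{equation*}
\int_{B_R(x_0)} G(|\nabla v-(\nabla v)_R|)\,dx \;\le\; 2^{2g_0+1}(g_0+1)^2\left(\int_{B_R(x_0)} G(|\nabla u-\nabla v|)\,dx + \int_{B_R(x_0)} G(|\nabla u-(\nabla u)_R|)\,dx\right).
\end{equation*}

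Plugging these bounds back and grouping terms, the coefficient of $(r/R)^{N+\sigma}\int_{B_R} G(|\nabla u-(\nabla u)_R|)dx$ is at most $2^{3g_0+1}(g_0+1)^3 C_L$, while the coefficient of $\int_{B_R} G(|\nabla u-\nabla v|)dx$ is at most $2^{2g_0+1}(g_0+1)^2 + 2^{3g_0+1}(g_0+1)^3 C_L$ (using $(r/R)^{N+\sigma}\le 1$ to absorb the perturbative contribution). Substituting $C_L=2^{g_0+1}(g_0+1)$ and bounding each coefficient by $c_2=2^{4(g_0+1)}(g_0+1)^4$ (for the $|\nabla u-\nabla v|$ piece) and by $c_2 C_L$ (for the $|\nabla u-(\nabla u)_R|$ piece) yields the claimed inequality. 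The conceptual steps are routine once the decomposition $\nabla u=(\nabla u-\nabla v)+\nabla v$ is set up; the only delicate point is the constant bookkeeping, in particular getting the power of $(g_0+1)$ down to four, which forces the use of $(G_2)$ \emph{linearly} (on a two-term sum, not iteratively on three terms) via the identity above for $\nabla u-(\nabla u)_r$.
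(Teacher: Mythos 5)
Your proposal is correct and follows essentially the same route as the paper: both arguments reduce to Lemma \ref{Lieberman} applied to the $G$-harmonic function $v$, bound the $r$-scale oscillation of $\nabla u$ by that of $\nabla v$ plus $\int_{B_R}G(|\nabla u-\nabla v|)dx$ via \eqref{Gu(u)r} and $(G_2)$, and then convert $\int_{B_R}G(|\nabla v-(\nabla v)_R|)dx$ back to the corresponding quantity for $u$ exactly as in the paper's inequality \eqref{v(v)R}. The only (cosmetic) difference is that on the small ball you use the linearity identity $\nabla u-(\nabla u)_r=(w-(w)_r)+(\nabla v-(\nabla v)_r)$ with \eqref{Gu(u)r} at $L=0$, where the paper instead takes $L=(\nabla v)_r$ in \eqref{Gu(u)r} and then applies $(G_2)$; both yield constants absorbed by $c_2=2^{4(g_0+1)}(g_0+1)^4$.
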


Finally, let us recall the following iteration lemma.

\begin{lemma}[{see \cite[Lemma 2.7]{LQO}}]\label{Leitao}
	Let $\phi$ be a non-negative and non-decreasing function. Suppose that
	$$\phi(r) \le A\left(\dfrac{r}{R}\right)^\alpha\phi(R)+ BR^\beta,$$
	for all $0<r\le R \le R_0$, where $A,B,\alpha,\beta$ are positive constants with $\beta<\alpha$. Fixed $\theta \in (0,1)$, choose $\tau \in (\beta,\alpha)$ such that $A\theta^\alpha=\theta^\tau$. Then, for any $0<\sigma\le \beta$ holds
	$$\phi(r) \le c_3 \left(\dfrac{r}{R}\right)^\sigma\big(\phi(R) + BR^\sigma\big),$$
	where $c_3 = \max \left\{\theta^{-\tau}, \dfrac{\theta^{-2\tau}}{1-\theta^{\tau-\beta}}\right\}$.
\end{lemma}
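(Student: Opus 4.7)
The plan is a standard dyadic iteration of the hypothesis along the geometric sequence of scales $\theta^k R$. First, I fix $\theta\in(0,1)$ by the balancing equation $A\theta^\alpha=\theta^\tau$, that is $\theta=A^{1/(\tau-\alpha)}$, which lies in $(0,1)$ provided $A\ge 1$ (in applications this is always the case; otherwise one enlarges $A$ in the hypothesis). Substituting $r=\theta R$ in the hypothesis yields the base estimate
$$\phi(\theta R)\le A\theta^\alpha\phi(R)+BR^\beta=\theta^\tau\phi(R)+BR^\beta.$$

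Next, I iterate by applying the hypothesis with $R$ replaced successively by $\theta^{j}R$ for $j=0,1,\ldots,k-1$. Induction on $k\ge 1$, together with summation of a geometric series (convergent because $\tau>\beta$), should produce
$$\phi(\theta^k R)\le\theta^{k\tau}\phi(R)+BR^\beta\theta^{(k-1)\beta}\sum_{j=0}^{k-1}\theta^{j(\tau-\beta)}\le\theta^{k\tau}\phi(R)+\dfrac{BR^\beta\theta^{(k-1)\beta}}{1-\theta^{\tau-\beta}}.$$
The inductive step reduces to the elementary identity $\theta^{\tau+(k-1)\beta}+\theta^{k\beta}(1-\theta^{\tau-\beta})=\theta^{k\beta}$. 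For arbitrary $r\in(0,R]$, I then choose $k\ge 0$ with $\theta^{k+1}R<r\le\theta^k R$; monotonicity gives $\phi(r)\le\phi(\theta^k R)$, and the comparisons $\theta^{k\tau}\le\theta^{-\tau}(r/R)^\tau$ and $\theta^{(k-1)\beta}\le\theta^{-2\beta}(r/R)^\beta$, combined with the identity $R^\beta(r/R)^\beta=r^\beta$, upgrade the discrete bound to
$$\phi(r)\le\theta^{-\tau}\left(\dfrac{r}{R}\right)^\tau\phi(R)+\dfrac{B\theta^{-2\beta}}{1-\theta^{\tau-\beta}}\,r^\beta.$$

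To match the stated form, for $0<\sigma\le\beta<\tau$ and $r\le R\le R_0$ I apply $(r/R)^\tau\le(r/R)^\sigma$ and $r^\beta\le R^{\beta-\sigma}(r/R)^\sigma R^\sigma$ so that both terms carry the common factor $(r/R)^\sigma$; enlarging $\theta^{-2\beta}$ to $\theta^{-2\tau}$ (valid since $\theta<1$ and $\tau>\beta$) then collects everything into $c_3(r/R)^\sigma(\phi(R)+BR^\sigma)$ with $c_3=\max\{\theta^{-\tau},\theta^{-2\tau}/(1-\theta^{\tau-\beta})\}$. The main obstacle is not conceptual but the bookkeeping of exponents through the iteration; the telescoping step has to be verified precisely so that the terminal constant reduces to exactly the stated $c_3$, and the final reindexing from the $\tau$-exponent bound to the $\sigma$-exponent bound relies on the monotonicity of $s\mapsto (r/R)^s$ for $r\le R$.
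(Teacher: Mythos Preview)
The paper does not prove this lemma; it merely cites it from \cite[Lemma 2.7]{LQO}. Your argument is exactly the standard Giaquinta-type dyadic iteration that one finds in that reference: fix the scale factor $\theta$ via $A\theta^\alpha=\theta^\tau$, iterate the hypothesis along $\theta^k R$, sum the resulting geometric series (convergent since $\tau>\beta$), and pass from the discrete scales to an arbitrary $r$ by sandwiching $\theta^{k+1}R<r\le\theta^k R$. This is correct and is the intended proof.

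One small caveat in your final step: when $\sigma<\beta$ strictly, your inequality $r^\beta\le R^{\beta-\sigma}(r/R)^\sigma R^\sigma=R^\beta(r/R)^\sigma$ leaves a factor $R^{\beta-\sigma}$ that is not absorbed by the stated constant $c_3$ unless $R\le 1$. This is not a defect in your iteration but rather a looseness in the lemma's statement as recorded here; in the paper's only application (Theorem \ref{regk}) one takes $\sigma=\beta=N$, where the issue does not arise and your argument gives precisely the stated $c_3$.
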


Now we are able to prove the main results of this section. Let us start by proving the existence of minimizers in $W_\psi^{1,\Phi_k}(\Omega)$ of the functional $\mathcal{J}_k$, as well as its uniform boundedness in the $L^\infty$-norm.

\begin{theorem}\label{ukLestimate}
	For any $k \in \mathbb{N}$, there exists a minimizer $u_k \in W_\psi^{1,\Phi_k}(\Omega)$ to $\mathcal{J}_k$. Furthermore, $u_k \in L^\infty(\Omega)$ and there exists a positive constant $C$ which depends only on $\displaystyle \|f\|_\infty,\mathcal{L}^N(\Omega),N,\sup_{\overline{\Omega}}\psi$, but does not depend on $k$, such that
	$$\|u_k\|_\infty \le C.$$
\end{theorem}
\begin{proof}
	First of all, since the space $W_\psi^{1,\Phi_k}(\Omega)$ is now reflexive and separable we can adopt the usual approach to prove the existence of minimizing, showing that the functional is bounded from below and thus considering a minimizing sequence. Now, to simplify our notation, let us refer to the minimizer of $\mathcal{J}_k$ as $u$. Our initial focus is on to establish the upper boundedness of $u$. The idea is use \cite[see Chapter 2, Lemma 5.2]{LU}. To begin, consider $j_0$ the smallest integer greater than $\sup_{\overline{\Omega}} \psi$, i.e.
	$$j_0 = \min \left\{ j \in \mathbb{N} \; \; : \; \; j \ge \sup_{\overline{\Omega}} \psi\right\}.$$
	For each $j \ge j_0$ define $u_j : \Omega \to \mathbb{R}$ by
	$$u_j \coloneqq\left \{
	\begin{array}{rclcl}
		j \cdot \mbox{sign}(u), &\mbox{if}& |u|>j;\\
		u, & \mbox{if}  & |u|\le j, \\
	\end{array}
	\right.$$
	where $\mbox{sign}(u)=1$ if $u>0$ and $\mbox{sign}(u)=-1$ if $u\le0$. Also we define the set $A_j \coloneqq [|u|>j]$ and we can observe that
	$$u_j = u \; \; \mbox{in} \ A_j^c \quad \mbox{and} \quad u_j = j \cdot \mbox{sign}(u) \; \; \mbox{in} \ A_j,$$
	as well as
	$$\nabla u_j = \nabla u \; \; \mbox{a.e. in} \ A_j^c \quad \mbox{and} \quad \nabla u_j = 0 \; \; \mbox{a.e. in} \ A_j.$$
	Since $u$ and $u_j$ have the same sign, by minimality of $u$ we get
	\begin{eqnarray}
		\int_{A_j} \Phi_k(|\nabla u|)dx &=& \int_\Omega \big(\Phi_k(|\nabla u|)- \Phi_k(|\nabla u_j|)\big)dx \nonumber \\
		&\le& - \int_\Omega \big(f(x,u_j)u_j - f(x,u)u\big)dx - \int_\Omega \big(\gamma(x,u_j) - \gamma(x,u)\big)dx \nonumber \\
		&=& - \int_{A_j} f(x,u)(u_j-u)dx \nonumber \\
		&\le& 2 \int_{A_j} |f(x,u)| \ (|u|-j)dx.\nonumber
	\end{eqnarray}
	It is noteworthy that, given the manner in which $j$ was considered, we have $(|u| - j)^+ \in W_0^{1,2}(\Omega)$ and the function $(|u| - j)^+$ within $\Omega$ aligns with $(|u| - j)$ in $A_j$. Thus, applying H\"{o}lder's and Poincaré's inequalities, we deduce:
	\begin{eqnarray}
		\int_{A_j} \Phi_k(|\nabla u|)dx &\le& 2 \|f\|_\infty C_\Omega \left(\int_{A_j} |\nabla (|u|-j)|^2dx\right)^{\frac{1}{2}} \mathcal{L}^N(A_j)^{\frac{1}{2}} \nonumber \\
		&=& 2 \|f\|_\infty C_\Omega \left(\int_{A_j} |\nabla u|^2dx\right)^{\frac{1}{2}} \mathcal{L}^N(A_j)^{\frac{1}{2}}, \quad j \ge j_0. \nonumber
	\end{eqnarray}
	Now, by Young's inequality, namely
	\begin{equation}\label{YI}
		ab \le \dfrac{q-1}{q}a^{\frac{q}{q-1}} + \dfrac{1}{q} b^q, \quad a,b \in \mathbb{R}_+, \; \; q>1,
	\end{equation}
	with
	$$a = \varepsilon^{1/2} \left( \int_{A_j} |\nabla u|^2\right)^{\frac{1}{2}}, \qquad b= 2\|f\|_\infty C_\Omega \mathcal{L}^N(A_j)^{\frac{1}{2}} \varepsilon^{-1/2},$$
	$q=2$ and $\varepsilon>0$ to be defined, we obtain
	$$\int_{A_j} \Phi_k(|\nabla u|)dx \le \dfrac{\varepsilon}{2} \int_{A_j}|\nabla u|^2dx + 2\|f\|_\infty^2 \ C_\Omega^2 \ \mathcal{L}^N(A_j)\varepsilon^{-1},$$
	and then by definition of $\Phi_k$ we have
	\begin{eqnarray}
		\left(\dfrac{1}{2}-\dfrac{\varepsilon}{2}\right)\int_{A_j} |\nabla u|^2dx &\le& \int_{A_j} \Phi_k(|\nabla u|)dx - \dfrac{\varepsilon}{2}\int_{A_j} |\nabla u|^2 dx \nonumber \\
		&\le& 2\|f\|_\infty^2 \ C_\Omega^2 \ \mathcal{L}^N(A_j)\varepsilon^{-1}. \nonumber
	\end{eqnarray}
	Choosing $\varepsilon = \frac{1}{2}$ we get
	$$\int_{A_j} |\nabla u|^2 dx \le 4\|f\|_\infty^2 \ C_\Omega^2 \ \mathcal{L}^N(A_j),$$
	for any $j \ge j_0$. Finally, considering $\beta \le 2/N$, since $1-\frac{2}{N}+\beta\le 1$ we have
	\begin{eqnarray}\label{graduAj}
		\int_{A_j} |\nabla u|^2 dx &\le& 4\|f\|_\infty^2 \ C_\Omega^2 \ \mathcal{L}^N(\Omega) \cdot \left(\dfrac{\mathcal{L}^N(A_j)}{\mathcal{L}^N(\Omega)}\right)^{1-\frac{2}{N}+\beta} \nonumber \\
		&=& 4\|f\|_\infty^2 \ C_\Omega^2 \ \mathcal{L}^N(\Omega)^{\frac{2}{N}-\beta} \ \mathcal{L}^N(A_j)^{1-\frac{2}{N}+\beta} \nonumber \\
		&\eqqcolon& C\big(\|f\|_\infty, N, \mathcal{L}^N(\Omega)\big) \ \mathcal{L}^N(A_j)^{1-\frac{2}{N}+\beta}, \quad j \ge j_0.
	\end{eqnarray}
	In this case, we can apply the Lemma 5.2 of \cite[see Chapter 2]{LU} with $m=2$, $\alpha=0$ and $\gamma= C\big(\|f\|_\infty, N, \mathcal{L}^N(\Omega)\big)$ which implies that
	\begin{eqnarray}\label{supu}
		\sup_\Omega u &\le& j_0 +  C\big(\|f\|_\infty, N, \mathcal{L}^N(\Omega)\big) \left(\int_{A_{j_0}} (u-j_0)dx\right)^{\frac{\beta}{2+\beta}}.
	\end{eqnarray}
	We claim that
	\begin{equation}\label{normone}
		\int_{A_{j_0}} (u-j_0)dx \le C,
	\end{equation}
	where $C>0$ is a constant independent of $k$. Indeed, it enough to observe that by H\"{o}lder's and Poincaré's inequalities and \eqref{graduAj} we get
	\begin{eqnarray}
		\int_{A_{j_0}} (u-j_0)dx &\le& \mathcal{L}^N(\Omega)^{\frac{1}{2}} \ C_\Omega \left(\int_{A_{j_0}} |\nabla u|^2dx\right)^{\frac{1}{2}} \nonumber \\
		&\le& C\big(\|f\|_\infty, N, \mathcal{L}^N(\Omega)\big) \ \mathcal{L}^N(A_{j_0})^{1-\frac{1}{N}+\frac{\beta}{2}} \nonumber \\
		&\le&  C\big(\|f\|_\infty, N, \mathcal{L}^N(\Omega)\big) \ \mathcal{L}^N(\Omega)^{1-\frac{1}{N}+\frac{\beta}{2}}, \nonumber
	\end{eqnarray}
	as claimed. Therefore, the upper boundedness of $u$ follows by combining \eqref{supu} and \eqref{normone}. By similar procedure we prove that $\inf_\Omega u \ge -C$, where $C>0$ is independent of $k$. This proves the proposition.
\end{proof}

As widely acknowledged in the literature, the $L^\infty$ estimate for the minimizer of $\mathcal{J}_k$ provides us with a universally applicable control over $u_k$ in $W_\psi^{1,\Phi_k}(\Omega)$. Indeed, note that
\begin{eqnarray}\label{uniestuk}
	\int_\Omega \Phi_k(|\nabla u_k|)dx &\le& \mathcal{J}_k(\psi) + \int_\Omega f(x,u_k)u_kdx - \int_\Omega \gamma(x,u_k)dx \nonumber \\
	&\le& \mathcal{J}(\psi) + \|u_k\|_\infty \|f\|_\infty + \|\gamma\|_1 \nonumber \\
	&\le& C_0,
\end{eqnarray}
where $C_0=C_0\big(\|f\|_\infty,\|\gamma\|_1,\psi, N, \mathcal{L}^N(\Omega)\big)>0$.

Having established the existence and boundedness of minimizers, our attention now turns to investigating their regularity.

\begin{theorem}\label{regk}
	Let $u_k \in W_\psi^{1,\Phi_k}(\Omega)$ be a minimizer of $\mathcal{J}_k$. Then, $\nabla u_k \in BMO_{loc}(\Omega)$. More precisely, for any $\Omega' \Subset \Omega$, there exists a constant $\widetilde{C}=\widetilde{C}(\|f\|_\infty,\|\gamma\|_1,N,\Omega',\Omega)>0$ such that
	\begin{equation}\label{gradientBMOwnk}
		\|\nabla u_k \|_{BMO_{loc}(\Omega)} \le \widetilde{C} \Big( \mbox{dist}(\Omega', \partial\Omega)^{-N} + \mbox{diam}(\Omega)^N\Big)^{\frac{1}{2}}.
	\end{equation}
	Furthermore, $u_k$ is locally Log-Lipschitz with the following estimate
	\begin{equation}\label{uxy}
		|u_k(x) - u_k(y)| \le C(N) \|\nabla u_k \|_{BMO_{loc}(\Omega)} \ |x-y| \ |\log |x-y||.
	\end{equation}
	In particular, $u_k \in C_{loc}^{0,\alpha}(\Omega)$, for all $\alpha \in (0,1)$.
\end{theorem}
\begin{proof}
	Let $y \in \Omega'$ and $R\coloneqq\mbox{dist}(\Omega',\partial\Omega)$. To simplify our notation, we will consider $u_k=u$ and $y=0$. The idea is to use Lemma \ref{Zeng} to estimate $\|\nabla u_k \|_{BMO_{loc}(\Omega)}$. To do this, consider also $v:B_R \to \mathbb{R}$ a $\Phi_k$-harmonic function such that $v-u \in W_0^{1,\Phi_k}(B_R)$. Let us start estimating $\int_{B_R} \Phi_k(|\nabla u - \nabla v|)dx$. By Lemma \ref{MartinezWolanski}
	\begin{eqnarray}\label{pkuv}
		\int_{B_R} \Phi_k(|\nabla u - \nabla v|)dx &\le& k \int_{B_R} \Big(\Phi_k(|\nabla u|) - \Phi_k(|\nabla v|)\Big)dx \nonumber \\
		&\le& k \left(\int_{B_R}\big(f(x,u)u - f(x,v)v\big)dx + \int_{B_R}\big(\gamma(x,v) - \gamma(x,u)\big)dx\right) \nonumber \\
		&\le&  k \left(\int_{B_R}\big(f(x,u)u - f(x,v)v\big)dx + \sup_{\overline{\Omega}} |\gamma| \ C(N)R^N\right),
	\end{eqnarray}
	provide that $u$ is a minimizer of $\mathcal{J}_k$. Furthermore, since 
	$$\int_{B_R}\big(f(x,u)u - f(x,v)v\big)dx \le \|f\|_\infty\int_{B_R} |u-v|dx,$$
	the H\"{o}lder and Poincaré's inequalities imply that
	\begin{eqnarray}
		\int_{B_R}\big(f(x,u)u - f(x,v)v\big)dx &\le& \|f\|_\infty \mathcal{L}^N(B_R)^{1/2} \left(\int_{B_R} |u-v|^2dx\right)^{1/2} \nonumber \\
		&\le& \|f\|_\infty C(N)R^{N/2} \left(\int_{B_R} |\nabla u-\nabla v|^2dx\right)^{1/2}. \nonumber
	\end{eqnarray}
	Now, by Young's inequality (see \eqref{YI}) with $q=2$,
	$$a = k^{-2}\left(\int_{B_R} |\nabla u-\nabla v|^2dx\right)^{1/2} \quad \text{and} \quad b=\|f\|_\infty C(N)R^{N/2} k^2,$$
	we get
	\begin{equation}\label{f1}
		\int_{B_R}\big(f(x,u)u - f(x,v)v\big)dx \le \dfrac{1}{2k} \int_{B_R}|\nabla u - \nabla v|^2dx + \dfrac{k\|f\|_\infty^2C(N)R^N}{2}.
	\end{equation}
	In addition, by definition of $\Phi_k$ we know that
	$$\int_{B_R}|\nabla u - \nabla v|^2dx \le \int_{B_R} \Phi_k(|\nabla u - \nabla v|)dx,$$
	in which it follows by \eqref{f1} that
	\begin{eqnarray}\label{fuv}
		\int_{B_R}\big(f(x,u)u - f(x,v)v\big)dx &\le& \dfrac{1}{2k} \int_{B_R}\Phi_k(|\nabla u - \nabla v|)dx + \dfrac{k\|f\|_\infty^2C(N)R^N}{2}.
	\end{eqnarray}
	By combining \eqref{pkuv} and \eqref{fuv} we have
	\begin{eqnarray}
		\int_{B_R}\Phi_k(|\nabla u - \nabla v|)dx &\le& \dfrac{1}{2}\int_{B_R} \Phi_k(|\nabla u - \nabla v|)dx + \dfrac{k^2\|f\|_\infty^2C(N)R^N}{2} + k\sup_{\overline{\Omega}} |\gamma| C(N)R^N \nonumber \\
		&\le&\dfrac{1}{2}\int_{B_R} \Phi_k(|\nabla u - \nabla v|)dx +  C(N,f,\gamma)k^2 R^N, \nonumber
	\end{eqnarray}
	or even,
	\begin{equation}\label{Pkuv}
		\int_{B_R}\Phi_k(|\nabla u - \nabla v|)dx \le C(N,f,\gamma)k^2 R^N.
	\end{equation}
	By Lemma \ref{Zeng}, for any $r \in (0,R)$, we get
	\begin{eqnarray}
		\int_{B_r}\Phi_k(|\nabla u - (\nabla u)_r|)dx &\le& 2^{5(2k+1)}k^5 \left(\dfrac{r}{R}\right)^{N+\sigma}\int_{B_R}\Phi_k(|\nabla u - (\nabla u)_R|)dx \nonumber \\
		&& \hspace{1.5cm}+ 2^{4(2k+1)}k^4\int_{B_R} \Phi_k(|\nabla u - \nabla v|)dx, \nonumber \\
		&\stackrel{\eqref{Pkuv}}{\le}& 2^{5(2k+1)}k^5 \left(\dfrac{r}{R}\right)^{N+\sigma}\int_{B_R}\Phi_k(|\nabla u - (\nabla u)_R|)dx \nonumber \\
		&& + 2^{4(2k+1)}k^6C(N,f,\gamma)R^N, \nonumber
	\end{eqnarray}
	Furthermore, we can use the Lemma \ref{Leitao} with $\alpha=N+\sigma$, $\beta=N$ and, for $k$ sufficiently large, $c_3 = (2^{5(2k+1)}k^5\theta)^{-1}$, for some $\theta \in (0,1)$ independent of $k$, which implies that
	
	\begin{eqnarray}\label{part1}
		\int_{B_r}\Phi_k(|\nabla u - (\nabla u)_r|)dx &\le& \dfrac{1}{2^{5(2k+1) }k^5\theta} \left(R^{-N} \int_{B_R}\Phi_k(|\nabla u - (\nabla u)_R|)dx  \right. \nonumber \\
		&& \hspace{4cm} + 2^{4(2k+1)}k^6C(N,f,\gamma)R^N\Big)r^N \nonumber \\
		&\stackrel{\eqref{Gu(u)r}}{\le}&\dfrac{1}{2^{5(2k+1)}k^5\theta}\left(2^{2k+1}k\ R^{-N}\int_{B_R}\Phi_k(|\nabla u|)dx \right. \nonumber \\
		&& \hspace{4cm} + 2^{4(2k+1)}k^6C(N,f,\gamma)R^N\Big)r^N \nonumber \\
		&\stackrel{\eqref{uniestuk}}{\le}& \dfrac{1}{2^{5(2k+1)}k^5\theta} \Big(2^{2k+1}k\ R^{-N}C_0 + 2^{4(2k+1)}k^6C(N,f,\gamma)R^N\Big)r^N \nonumber \\
		&\le& \dfrac{k}{2^{2k+1}}\Big(C_0\mbox{dist}(\Omega',\partial\Omega)^{-N} + C(N,f,\gamma)\mbox{diam}(\Omega)^N\Big)r^N, \nonumber \\
	\end{eqnarray}
	where $C_0>0$, given in \eqref{uniestuk}, does not depends on $k$. On the other hand, by definition of $\Phi_k$ and Jensen's inequality we get
	\begin{equation}\label{part2}
		\int_{B_r}\Phi_k(|\nabla u - (\nabla u)_r|)dx \ge \left(\int_{B_r} |\nabla u - (\nabla u)_r|dx\right)^{2}.
	\end{equation}
	By combining \eqref{part1} and \eqref{part2} we obtain
	$$\int_{B_r} |\nabla u - (\nabla u)_r|dx \le\left(\dfrac{k}{2^{2k+1}}\right)^{\frac{1}{2}}\Big(C_0\mbox{dist}(\Omega',\partial\Omega)^{-N} + C(N,f,\gamma)\mbox{diam}(\Omega)^N\Big)^{\frac{1}{2}} r^{\frac{N}{2}},$$
	and then, for $k$ large enough we have
	$$\int_{B_r} |\nabla u - (\nabla u)_r|dx \le \widetilde{C}\Big(\mbox{dist}(\Omega',\partial\Omega)^{-N} + \mbox{diam}(\Omega)^N\Big)^{\frac{1}{2}}r^{\frac{N}{2}},$$
	where $\widetilde{C}>0$ depends on  $\|f\|_\infty,\|\gamma\|_1,\psi,N,\mathcal{L}^N(\Omega)$ and this complete the proof of \eqref{gradientBMOwnk}. The second part follows immediately by \cite[Lemma 1]{AB}.
\end{proof}

\section{Existence and regularity for minimizers of $\mathcal{J}$}\label{SecER}

In this section, we focus on the original functional $\mathcal{J}$. Here we will prove Theorems \ref{existenceintro}, \ref{reg} and \ref{WV}, stated in the introduction.

\subsection{Existence and regularity}

In this subsection, we will prove the Theorems \ref{existenceintro} and \ref{reg}. Let us start with the existence and boundedness.

\begin{proof}[{Proof of Theorem \ref{existenceintro}}]
	Let $u_k$ a minimizer of $\mathcal{J}_k$ in $W_\psi^{1,\Phi_k}(\Omega)$ and any $v \in W_\psi^{1,\Phi}(\Omega)$. Then, we get
	\begin{equation}\label{9}
		\int_\Omega \Big(\Phi_k(|\nabla u_k|) - f(x,u_k)u_k + \gamma(x,u_k)\Big) dx
		\le \int_\Omega \Big(\Phi(|\nabla v|) - f(x,v)v + \gamma(x,v)\Big) dx.
	\end{equation}
	Fixed $n \in \mathbb{N}$, let us start proving that $\{u_k\}_{k\ge n}$ is bounded in $W^{1,2n}(\Omega)$. To do this, choose
	$$0<\varepsilon \le \dfrac{1}{n!2^{4n}C_\Omega},$$
	where $C_\Omega>0$ is the Poincaré constant for the space $W_0^{1,2n}(\Omega)$. By choice of $\varepsilon$, we get
	$$C(n,\Omega) \coloneqq \dfrac{1}{n!} - 2^{4n}C_\Omega\varepsilon \ge 0.$$
	By Lemma \ref{growthf}, we have
	\begin{eqnarray}\label{10}
		\int_\Omega f(x,u_k)u_k dx &\le& C(\|f\|_\infty, \mathcal{L}^N(\Omega)) + 2^{2n}\varepsilon \int_\Omega |u_k - \psi|^{2n} dx + 2^{2n}\varepsilon ||\psi||_{2n}^{2n} \nonumber \\
		&\le& 2^{4n}C_\Omega\varepsilon \int_\Omega |\nabla u_k|^{2n} dx + C\big(\|f\|_\infty,n,C_\Omega,\varepsilon,||\psi||_{1,2n},\mathcal{L}^N(\Omega)\big).
	\end{eqnarray}
	Thus, by combining \eqref{9}, \eqref{10} and using the definition of $\Phi_k$ we get that
	\begin{eqnarray}\label{*}
		C(n,\Omega)\int_\Omega |\nabla u_k|^{2n} dx &\le& \int_\Omega \Phi_k(|\nabla u_k|)dx -2^{4n}C_\Omega\varepsilon \int_\Omega |\nabla u_k|^{2n} dx \nonumber \\
		&\le& \int_\Omega \Phi_k(|\nabla u_k|)dx - \int_\Omega f(x,u_k)u_kdx + \int_\Omega \gamma(x,u_k)dx - \int_\Omega \gamma(x,u_k)dx \nonumber \\
		&& + \  C\big(\|f\|_\infty,\|\gamma\|_1,n,C_\Omega,\varepsilon,||\psi||_{1,2n},\mathcal{L}^N(\Omega)\big), \nonumber\\
		&\le& \int_\Omega \Big(\Phi(|\nabla v|) - f(x,v)v + \gamma(x,v)\Big) dx \nonumber \\
		&& + \  C\big(\|f\|_\infty,\|\gamma\|_1,n,C_\Omega,\varepsilon,||\psi||_{1,2n},\mathcal{L}^N(\Omega)\big),
	\end{eqnarray}
	for all $k \ge n$. Now, define $v_k=u_k - v$ and note that $v_k \in W_0^{1,2n}(\Omega)$, for all $k \ge n$. Indeed, it is enough observe that
	$$\int_\Omega |u_k - v|^{2n}dx \le n! \int_\Omega \Phi_k(u_k - v)dx < \infty,$$
	and the same for the gradient. Then, by Poincaré's inequality for the space $W_0^{1,2n}(\Omega)$
	\begin{equation}\label{PIW2n}
		\left(\int_\Omega |v_k|^{2n}dx\right)^{\frac{1}{2n}} \le C_\Omega \left(\int_\Omega |\nabla v_k|^{2n}dx\right)^{\frac{1}{2n}},
	\end{equation}
	for all $k\ge n$, where $C_\Omega>0$ is a constant independent of $k$. Consequently, by \eqref{PIW2n} we get
	\begin{eqnarray}
		\left(\int_\Omega |u_k|^{2n}dx\right)^{\frac{1}{2n}} &\le& \left(\int_\Omega |u_k-v|^{2n}dx\right)^{\frac{1}{2n}} + \left(\int_\Omega |v|^{2n}dx\right)^{\frac{1}{2n}} \nonumber \\
		&\le& C_\Omega \left(\int_\Omega |\nabla v_k|^{2n}dx\right)^{\frac{1}{2n}} + \left(\int_\Omega |v|^{2n}dx\right)^{\frac{1}{2n}}. \nonumber
	\end{eqnarray}
	Now, by definition of $v_k$ and triangle inequality we obtain
	\begin{eqnarray}\label{**}
		\left(\int_\Omega |u_k|^{2n}dx\right)^{\frac{1}{2n}}  &\stackrel{\eqref{*}}{\le}& C_\Omega \left(\int_\Omega |\nabla v|^{2n}dx\right)^{\frac{1}{2n}} + \left(\int_\Omega |v|^{2n}dx\right)^{\frac{1}{2n}} \nonumber \\
		&& + \ C_\Omega \Bigg(\dfrac{1}{C(n,\Omega)} \int_\Omega \Big(\Phi(|\nabla v|) - f(x,v)v + \gamma(x,v)\Big) dx \nonumber \\
		&& \hspace{2.5cm}+ \ C\big(\|f\|_\infty,n,C_\Omega,\varepsilon,||\psi||_{1,2n},|\Omega|,C(\gamma)\big)\Bigg)^{\frac{1}{2n}},
	\end{eqnarray}
	for any $k \ge n$. As the constants are independent of $k$, it can be deduced from \eqref{*} and \eqref{**} that the sequence $\{u_k\}_{k \ge n}$ is bounded in the Sobolev space $W^{1,2n}(\Omega)$ with respect to $k$, as desired. Employing a diagonal argument, for each $n \in \mathbb{N}$, there exists a subsequence, still denoted by $\{u_k\}_{k \ge n}$, and a function $u_0 \in W^{1,2n}(\Omega)$ such that
	$$u_k \rightharpoonup u_0 \ \mbox{in} \ W^{1,2n}(\Omega) \ \mbox{as} \ k \to \infty,$$
	where $u_0$ is independent of $n$. Moreover, the compact embeddings of $W^{1,2n}(\Omega) \hookrightarrow L^{2n}(\Omega)$ and $W^{1,2n}(\Omega) \hookrightarrow C(\overline{\Omega})$ imply the following convergences:
	$$\left\{\begin{aligned}
		&u_k \to u_0, \; \; \text{in} \; \; L^{2n}(\Omega);&\\
		&u_0 = \psi, \; \; \text{on} \; \; \partial\Omega;&\\
		&u_k(x) \to u_0(x), \; \; \text{a.e. in} \ \Omega.&
	\end{aligned}\right.$$
	Combining these convergences with the fact that the sequence $\{\nabla u_k\}_{k \ge n}$ is bounded in $\big(L^{2n}(\Omega)\big)^N$, we can infer that
	$$\nabla u_k \rightharpoonup \nabla u_0 \;\; \mbox{in} \;\; \big(L^{2n}(\Omega)\big)^N.$$
	Furthermore, by Lebesgue's Theorem with aid of Lemma \ref{growthf} we have
	$$\lim_{k \to \infty} \int_\Omega f(x,u_k)u_kdx = \int_\Omega f(x,u_0)u_0 dx \quad \mbox{and} \quad \lim_{k \to \infty} \int_\Omega \gamma(x,u_k)dx = \int_\Omega \gamma(x,u_0)dx.$$
	As a consequence, for any $m \in \mathbb{N}$, it follows from the sequentially weak lower semi-continuity of the norm that
	\begin{eqnarray}
		&&\sum_{n=1}^m \dfrac{1}{n!} \int_\Omega |\nabla u_0|^{2n} dx - \int_\Omega f(x,u_0)u_0dx + \int_\Omega \gamma(x,u_0)dx \nonumber \\
		&\le& \sum_{n=1}^m \dfrac{1}{n!}\liminf_{k \to \infty} \int_\Omega |\nabla u_k|^{2n} dx - \lim_{k \to \infty} \int_\Omega f(x,u_k)u_kdx + \lim_{k \to \infty} \int_\Omega\gamma(x,u_k)dx \nonumber \\
		&=& \liminf_{k \to \infty} \left(\sum_{n=1}^m \dfrac{1}{n!} \int_\Omega |\nabla u_k|^{2n} dx - \int_\Omega f(x,u_k)u_kdx + \int_\Omega\gamma(x,u_k)dx \nonumber \right),
	\end{eqnarray}
	and so by \eqref{9} we get
	\begin{eqnarray}\label{d}
		&&\sum_{n=1}^m \dfrac{1}{n!} \int_\Omega |\nabla u_0|^{2n} dx - \int_\Omega f(x,u_0)u_0dx + \int_\Omega \gamma(x,u_0)dx \nonumber \\
		&\le& \liminf_{k \to \infty} \left(\sum_{n=1}^k \dfrac{1}{n!} \int_\Omega |\nabla u_k|^{2n} dx - \int_\Omega f(x,u_k)u_kdx + \int_\Omega\gamma(x,u_k)dx \nonumber \right) \nonumber \\
		&\le& \liminf_{k \to \infty} \left(\sum_{n=1}^k \dfrac{1}{n!} \int_\Omega |\nabla v|^{2n} dx - \int_\Omega f(x,v)vdx + \int_\Omega\gamma(x,v)dx \right) \nonumber \\
		&=& \int_\Omega \Big( \Phi(|\nabla v|) - f(x,v)v + \gamma(x,v)\Big) dx,
	\end{eqnarray}
	for any $v \in W_\psi^{1,\Phi}(\Omega)$. Finally, taking the limit as $m \to \infty$ in \eqref{d}, we conclude that
	$$\int_\Omega \Big( \Phi(|\nabla u_0|) - f(x,u_0)u_0 + \gamma(x,u_0)\Big) dx \\ \le \int_\Omega \Big( \Phi(|\nabla v|) - f(x,v)v + \gamma(x,v)\Big) dx,$$
	which shows, by arbitrary of $v \in W_\psi^{1,\Phi}(\Omega)$, that $u_0$ is a minimizer of $\mathcal{J}$ in $W_\psi^{1,\Phi}(\Omega)$, as desired.	
	
	Finally, since $u_0$ is the uniform limit of the sequence $\{u_k\}_{k \in \mathbb{N}}$ of minimizers of $\mathcal{J}_k$, we get by Theorem \ref{ukLestimate} that there exists a constant $C>0$ such that
	$$\|u_0\|_\infty \le C,$$
	where $C$ depends on $\displaystyle \|f\|_\infty,\mathcal{L}^N(\Omega),N,\sup_{\overline{\Omega}}\psi$, and this is precisely the assertion of the theorem.
\end{proof}

\begin{remark}
	Similarly to the previous discussion (see \eqref{uniestuk}), an immediate consequence of the $L^\infty$ estimate in the previous theorem is a uniform control of the $W^{1,\Phi}(\Omega)$-norm of the minimizers of functional $\mathcal{J}$. This control depends solely on universal constants and intrinsic parameters, as elaborated in \eqref{uniestuk}.
\end{remark}

Now, with the aid of Theorems \ref{ukLestimate} and \ref{regk} we will prove the Log-Lipschitz regularity for this minimizers.

\begin{proof}[Proof of Theorem \ref{reg}]
	It is sufficient to note that, since $u_0$ is uniform limit of the sequence $\{u_k\}$ we get
	\begin{eqnarray}
		|u_0(x) - u_0(y)| &\le& |u_0(x) -u_k(x)| + |u_0(y) -u_k(y)| + |u_k(x) -u_k(y)| \nonumber \\
		&\stackrel{\eqref{gradientBMOwnk}}{\le}& \dfrac{1}{2} + \dfrac{1}{2} + C \|\nabla u_k \|_{BMO_{loc}(\Omega)} \ |x-y| \ |\log |x-y|| \nonumber \\
		&\stackrel{\eqref{uxy}}{\le}& C|x-y| \ |\log |x-y||, \nonumber
	\end{eqnarray}
	where $C>0$ does not depends on $k$.
\end{proof}

\subsection{The operator $\Delta u + 2 \Delta_\infty u$}

In this subsection we will prove that the function $u_0$ given in Theorem \ref{existenceintro} is also a solution of a class of problems involving the operator
$$L(u) \coloneqq \Delta u + 2 \Delta_\infty u,$$
where
$$ \Delta_\infty u = \sum_{i,j=1}^N \dfrac{\partial u}{\partial x_i}\dfrac{\partial u}{\partial x_j} \dfrac{\partial^2 u}{\partial x_ix_j}.$$
$L$ is recognized as the Euler-Lagrange operator corresponding to the functional associated with the integral
$$\int_\Omega \Phi(|\nabla u|)dx.$$
In fact, the next lemma states precisely this relation between the exponential functional and the operator $L$. This establishes the basis for our proof. 

\begin{lemma}\label{inftyL}
	Let $U \in \mathbb{R}^N$ be a bounded domain, $u \in W^{1,\Phi}(U)$ and $h \in L^\infty(U)$. Then, $u$ satisfies
	$$-\mbox{div} \left(\varphi(|\nabla u|)\dfrac{\nabla u}{|\nabla u|}\right) = h(x), \ \mbox{in} \ U,$$
	if, and only if,
	$$-\Delta u - 2 \Delta_\infty u = h(x)\exp(-|\nabla u|^2) \ \mbox{in} \ U,$$
	both in distributional sense.
\end{lemma}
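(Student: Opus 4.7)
The approach is to first establish a pointwise identity for smooth $u\in C^2(U)$ and then extend it to $u\in W^{1,\Phi}(U)$ by mollification. The crucial observation is that with $\Phi(s)=e^{s^2}-1$, one has $\varphi(s)=\Phi'(s)=2se^{s^2}$, so the coefficient $\varphi(|\nabla u|)/|\nabla u|$ collapses to the smooth positive factor $2e^{|\nabla u|^2}$. Thus the first equation is just $-\mbox{div}(2e^{|\nabla u|^2}\nabla u)=h$, and the whole lemma will follow from multiplying/dividing by this factor, once the pointwise identity relating the divergence form to $\Delta u+2\Delta_\infty u$ is available.

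For smooth $u$ the pointwise identity comes from a direct product-rule computation. I would expand
$$\mbox{div}\big(2e^{|\nabla u|^2}\nabla u\big)=2e^{|\nabla u|^2}\Delta u+2\nabla\big(e^{|\nabla u|^2}\big)\cdot\nabla u,$$
observe that $\nabla(e^{|\nabla u|^2})=e^{|\nabla u|^2}\nabla(|\nabla u|^2)$ and, from $\partial_{x_j}(|\nabla u|^2)=2\sum_i u_{x_i}u_{x_ix_j}$, that $\nabla(|\nabla u|^2)\cdot\nabla u=2\Delta_\infty u$. This produces
$$\mbox{div}\big(2e^{|\nabla u|^2}\nabla u\big)=2e^{|\nabla u|^2}\big(\Delta u+2\Delta_\infty u\big).$$
Dividing by the positive factor $2e^{|\nabla u|^2}$ rewrites the first equation as the second at the pointwise level, with the exponential factor absorbed into the right-hand side.

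For general $u\in W^{1,\Phi}(U)$ I would pass to the distributional setting by mollification $u_\varepsilon=u*\rho_\varepsilon$, apply the pointwise identity to each smooth $u_\varepsilon$, test against $\eta\in C_c^\infty(U)$, and let $\varepsilon\to 0$. The convergence $e^{|\nabla u_\varepsilon|^2}\nabla u_\varepsilon\cdot\nabla\eta\to e^{|\nabla u|^2}\nabla u\cdot\nabla\eta$ in $L^1$ is guaranteed by the fact that $|\nabla u|\in K_\Phi(U)$ forces $e^{|\nabla u|^2}$ to be locally integrable, together with standard mollifier estimates and dominated convergence. The main obstacle will be assigning a precise distributional meaning to the nonlinear term $\Delta_\infty u$ when $u$ has only Orlicz–Sobolev regularity; I plan to resolve this by defining the second equation through the pointwise identity on the smooth approximants, so that both equations ultimately encode the same pairing against $C_c^\infty(U)$ test functions. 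The reverse implication follows symmetrically by multiplying the distributional identity by the smooth positive factor $2e^{|\nabla u|^2}$ on the approximating sequence before passing to the limit.
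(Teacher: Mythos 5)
The paper states Lemma 5.1 without giving any proof (it is presented as a known fact, cf.\ the references on exponentially harmonic maps), so there is no argument of the authors' to compare yours against; judged on its own, your computation is the right one and contains essentially all of the content of the lemma. Writing $\varphi(s)=2se^{s^2}$ and expanding
$$\mbox{div}\big(2e^{|\nabla u|^2}\nabla u\big)=2e^{|\nabla u|^2}\big(\Delta u+2\Delta_\infty u\big)$$
is exactly the identity that is used later (in Proposition 5.2 it is applied to $C^2$ test functions $\widetilde\psi$, where the pointwise version is what matters). One arithmetic remark: carry your own computation to the end. Dividing by the positive factor $2e^{|\nabla u|^2}$ gives $-\Delta u-2\Delta_\infty u=\tfrac12\,h(x)e^{-|\nabla u|^2}$, which differs from the stated right-hand side by a factor $\tfrac12$. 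The constant is immaterial downstream (there $h=f_\pm$ is an arbitrary bounded function), but your closing sentence of the smooth case silently drops it, and the statement in the paper appears to carry the same slip; you should at least record the discrepancy rather than assert that division "rewrites the first equation as the second."

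The mollification step is where the proposal is weakest, and you have in fact diagnosed the problem yourself: $\Delta_\infty u$ has no intrinsic distributional meaning for $u\in W^{1,\Phi}(U)$, and the only coherent reading of the second equation is as the weak formulation $\int_U 2e^{|\nabla u|^2}\nabla u\cdot\nabla\eta\,dx=\int_U h\eta\,dx$ obtained by moving $e^{-|\nabla u|^2}$ to the other side --- which is precisely the weak formulation of the first equation. Once the second equation is \emph{defined} this way, the equivalence is immediate from the algebraic identity for the flux, and the mollification contributes nothing. If instead one insists on a genuine a.e.\ second-order meaning, mollification does not supply it: $\nabla^2 u_\varepsilon$ need not converge in any sense compatible with the nonlinear coefficient, and the dominated-convergence step for $e^{|\nabla u_\varepsilon|^2}\nabla u_\varepsilon$ is itself delicate (convexity of $\Phi$ gives $\Phi(|\nabla u_\varepsilon|)\le \Phi(|\nabla u|)*\rho_\varepsilon$ and hence $L^1_{loc}$ convergence via Vitali, but no fixed pointwise dominating function; moreover $u\in W^{1,\Phi}(U)$ alone only gives $\int\Phi(|\nabla u|/\lambda)<\infty$ for some $\lambda$, so even the local integrability of $e^{|\nabla u|^2}|\nabla u|$ needs the stronger membership $|\nabla u|\in K_\Phi$). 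The clean fix is to prove the pointwise identity for $C^2$ functions, as you do, and then state the $W^{1,\Phi}$ version purely as an identification of the two weak formulations, dispensing with the approximation argument altogether.
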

%	\begin{proof}
	%		Indeed, note that fixed $i \in \{1,2,...N\}$ we have
	%		\begin{eqnarray}
		%			\dfrac{\partial}{\partial x_i} \left(\exp(|\nabla u|^2) \dfrac{\partial u}{\partial x_i}\right) &=& \dfrac{\partial^2 u}{\partial x_i^2} \exp(|\nabla u|^2) + \dfrac{\partial}{\partial x_i} \left(\exp\left(\sum_{j=1}^N \left[\dfrac{\partial u}{\partial x_j}\right]^2 \right)\right)\dfrac{\partial u}{\partial x_i} \nonumber \\
		%			&=& \dfrac{\partial^2 u}{\partial x_i^2} \exp(|\nabla u|^2) + \exp(|\nabla u|^2)\dfrac{\partial}{\partial x_i} \left(\sum_{j=1}^N \left[\dfrac{\partial u}{\partial x_j}\right]^2\right)\dfrac{\partial u}{\partial x_i} \nonumber \\
		%			&=& \dfrac{\partial^2 u}{\partial x_i^2} \exp(|\nabla u|^2) + \exp(|\nabla u|^2) 2 \sum_{j=1}^N \dfrac{\partial u}{\partial x_j} \cdot \dfrac{\partial^2 u}{\partial x_i\partial x_j} \cdot \dfrac{\partial u}{\partial x_i}. \nonumber
		%		\end{eqnarray}
	%		Passing to the sum in relation to $i$ in this equality, we get
	%		\begin{eqnarray}
		%			\mbox{div} \left(\varphi(|\nabla u|)\dfrac{\nabla u}{|\nabla u|}\right) &=& \sum_{i=1}^N \dfrac{\partial}{\partial x_i} \left(\exp(|\nabla u|^2) \dfrac{\partial u}{\partial x_i}\right)\nonumber \\
		%			&=& \exp(|\nabla u|^2) \left(\sum_{i=1}^N \dfrac{\partial^2 u}{\partial x_i^2} + 2 \sum_{i,j=1}^N \dfrac{\partial u}{\partial x_j} \cdot \dfrac{\partial^2 u}{\partial x_i\partial x_j} \cdot \dfrac{\partial u}{\partial x_i}\right) \nonumber \\
		%			&=& \exp(|\nabla u|^2) \Big(\Delta u + 2 \Delta_\infty u\Big), \nonumber
		%		\end{eqnarray}
	%		and this complete the proof of the lemma.
	%	\end{proof}

Now that we have established the relationship between operators, let us return to the initial objective. The next lemma is pivotal in achieving this goal.

\begin{lemma}\label{subsol}
	Let $u_0 \in W_\psi^{1,\Phi}(\Omega)$ be a minimizer of $\mathcal{J}$, given in Theorem \ref{existenceintro}. Then, $u_0$ is subsolution, i.e.,
	$$0 \ge \int_\Omega \varphi(|\nabla u_0|) \dfrac{\nabla u_0}{|\nabla u_0|}\nabla v \ dx - \int_\Omega f(x,u_0)v\ dx,$$
	for any $v \in C_0^\infty(\Omega)$, with $v\ge 0$.
\end{lemma}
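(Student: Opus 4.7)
The natural approach is a one-sided variation. Fix $v\in C_0^\infty(\Omega)$ with $v\ge 0$ and a small parameter $t>0$, and use the competitor $u_t:=u_0-tv$, which lies in $W_\psi^{1,\Phi}(\Omega)$ since $v$ has compact support. By minimality, $\mathcal{J}(u_t)-\mathcal{J}(u_0)\ge 0$, and I would split this difference into three pieces $I_1(t)+I_2(t)+I_3(t)$ coming respectively from the gradient term, the $-f(x,u)u$ term, and the $\gamma$ term. Dividing by $t$ and letting $t\to 0^+$ should produce the announced inequality.

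For $I_1(t)/t$: since $\Phi\in C^1$ with derivative $\varphi$, and $\varphi(|\nabla u_0|)\frac{\nabla u_0}{|\nabla u_0|}\in L_{\widetilde{\Phi}}(\Omega)$ by Lemma \ref{tilde}(a), the $W^{1,\Phi}$-regularity of $u_0$ together with the convexity of $\Phi$ (which provides pointwise domination of the difference quotient) yields by dominated convergence
$$\frac{I_1(t)}{t}\ \longrightarrow\ -\int_\Omega \varphi(|\nabla u_0|)\frac{\nabla u_0}{|\nabla u_0|}\cdot\nabla v\,dx.$$

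For $I_2(t)/t$: I partition $\Omega$ by the signs of $u_0$ and $u_t$ into $R_1=\{u_0>tv\}$, $R_2=\{0<u_0\le tv\}$, $R_3=\{u_0\le 0\}$. On $R_1\cup R_3$ no phase transition occurs and the integrand is exactly $-t\,f(x,u_0)v$; on the transition region $R_2$ both $|u_0|\le tv$ and $|u_t|\le tv$, so the integrand is bounded by $2t\|f\|_\infty v$. Since $\chi_{R_2}\to 0$ pointwise as $t\to 0^+$ and $v$ is bounded, dominated convergence gives
$$\frac{I_2(t)}{t}\ \longrightarrow\ \int_\Omega f(x,u_0)v\,dx.$$

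The remaining term $I_3(t)/t$ is concentrated on $R_2$ and equals $t^{-1}\int_{R_2}(\gamma_-(x)-\gamma_+(x))\,dx$; the goal is to show $\limsup_{t\to 0^+}I_3(t)/t\le 0$, after which combining the three limits gives the subsolution inequality. The main obstacle is exactly this step: one must perform a fine measure-theoretic analysis of the phase-transition set $R_2$ near $\{u_0=0\}$. My plan is to use the Log-Lipschitz regularity of $u_0$ from Theorem \ref{reg} to control the thickness of $R_2$, then argue by a cutoff/density reduction in which $v$ is first taken to vanish near $\{u_0=0\}$ (making $R_2$ empty for small $t$) and the general case recovered by approximation, exploiting the structural form of $\gamma(x,\cdot)$.
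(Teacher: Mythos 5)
Your variational setup coincides with the paper's: the downward competitor $u_0-\varepsilon v$ with $v\ge 0$, the three-way splitting, and the limits you claim for the gradient term and for the $f$-term are exactly those in the paper's proof. (The paper in fact only uses the one-sided convexity bound $\Phi(|\nabla u_0-\varepsilon\nabla v|)-\Phi(|\nabla u_0|)\le -\varepsilon\,\varphi(|\nabla u_0-\varepsilon\nabla v|)\tfrac{\nabla u_0-\varepsilon\nabla v}{|\nabla u_0-\varepsilon\nabla v|}\cdot\nabla v$ and then passes to the limit, rather than computing the full G\^ateaux derivative, but your version is equivalent here; and your domination of the $f$-quotient by $2\|f\|_\infty v$ on the transition set $R_2$ is the right observation, one the paper leaves implicit.)

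The gap is the $\gamma$-term, which you leave as a plan rather than a proof, and the plan would not work. The paper disposes of this term by the same pointwise phase argument used for $f$: since $v\ge 0$ one has $[u_0\le 0]\subset[u_0-\varepsilon v\le 0]$, so no phase change occurs on the negative set, while for a.e.\ $x\in[u_0>0]$ the sign of $u_0-\varepsilon v$ is eventually preserved, whence the difference quotient of $\gamma$ is identically zero for small $\varepsilon$ (this is \eqref{g0}); the quotient is nonzero only on $R_2=[0<u_0\le\varepsilon v]$, where it equals $(\gamma_--\gamma_+)/\varepsilon$. Your instinct that one must then control $\mathcal{L}^N(R_2)/\varepsilon$ is the honest way to justify interchanging limit and integral, but the tools you invoke cannot deliver it: Log-Lipschitz regularity bounds the oscillation of $u_0$ from \emph{above} and therefore gives no decay rate for $\mathcal{L}^N(\{0<u_0\le s\})$ as $s\to 0$ --- for that one needs a nondegeneracy (lower) bound on the growth of $u_0$ away from $\{u_0=0\}$, which is not available at this stage of the paper (the measure estimate of the transition layer is only obtained in Section \ref{SecFP}, under the extra ordering hypothesis \eqref{gc}, and even there it is $O(\varepsilon)$ rather than the $o(\varepsilon)$ you would need). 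The fallback of first taking $v$ to vanish near $\{u_0=0\}$ and recovering the general case by approximation also fails in general, e.g.\ when $\{u_0=0\}$ has positive measure or accumulates on $\mbox{supp}\,v$, since one cannot approximate an arbitrary $v\ge 0$ in $C_0^\infty$ by admissible cutoffs with convergent gradients across $\{u_0=0\}$. As written, your argument therefore does not close; to match the paper you should establish the pointwise identity \eqref{g0} via the sign argument above and state the convergence claim for its integral, or else supply a measure estimate on $R_2$.
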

\begin{proof}
	For $v \in C_0^\infty(\Omega)$, with $v \ge 0$, and $\varepsilon>0$, by minimality of $u_0$, we get
	\begin{eqnarray}\label{Pfg}
		0 &\le& \dfrac{1}{\varepsilon}\big(\mathcal{J}(u_0-\varepsilon v) - \mathcal{J}(u_0)\big) \nonumber \\
		&=&\dfrac{1}{\varepsilon} \int_\Omega \Big( \Phi(|\nabla u_0 - \varepsilon\nabla v|) - \Phi(|\nabla u_0|)\Big)dx - \dfrac{1}{\varepsilon} \int_\Omega\big(f(x,u_0 -\varepsilon v)(u_0-\varepsilon v) - f(x,u_0)u_0\big) \nonumber \\
		&& + \dfrac{1}{\varepsilon} \int_\Omega\big(\gamma(x,u_0 -\varepsilon v)- \gamma(x,u_0)\big)dx.
	\end{eqnarray}	
	We claim that
	\begin{equation}\label{64}
		\lim_{\varepsilon \to 0} \dfrac{1}{\varepsilon} \int_\Omega\big(f(x,u_0 -\varepsilon v)(u_0-\varepsilon v) - f(x,u_0)u_0\big) = - \int_\Omega f(x,u_0)vdx.
	\end{equation}
	
	Indeed, note that
	\begin{eqnarray}\label{qf+-}
		\dfrac{f(x,u_0 -\varepsilon v)(u_0-\varepsilon v) - f(x,u_0)u_0}{\varepsilon} &=& \dfrac{f_+(x)}{\varepsilon} \big(\chi_{[u_0 -\varepsilon v > 0]} (u_0 -\varepsilon v) - \chi_{[u_0>0]}u_0\big) \nonumber \\
		&& + \dfrac{f_-(x)}{\varepsilon} \big(\chi_{[u_0 -\varepsilon v \le 0]} (u_0 -\varepsilon v) - \chi_{[u_0\le 0]}u_0\big).
	\end{eqnarray}
	If $x \in [u_0 \le 0]$, since $v \ge 0$ we have $x \in [u_0 - \varepsilon v \le 0]$. Thus, by \eqref{qf+-}
	\begin{eqnarray}\label{f-}
		\dfrac{f(x,u_0 -\varepsilon v)(u_0-\varepsilon v) - f(x,u_0)u_0}{\varepsilon} &=& \dfrac{f_-(x)}{\varepsilon} (u_0-\varepsilon v - u_0) = -f_-(x)v.
	\end{eqnarray}
	Otherwise, if $x \in [u_0 >0]$, just choose $\varepsilon>0$ small enough so that $x \in [u_0 - \varepsilon v >0]$. Then, it follows by \eqref{qf+-} that
	\begin{eqnarray}\label{f+}
		\dfrac{f(x,u_0 -\varepsilon v)(u_0-\varepsilon v) - f(x,u_0)u_0}{\varepsilon} &=& \dfrac{f_+(x)}{\varepsilon} (u_0-\varepsilon v - u_0) = -f_+(x)v.
	\end{eqnarray}
	By combining \eqref{f-} and \eqref{f+} we get
	\begin{eqnarray}
		\dfrac{f(x,u_0 -\varepsilon v)(u_0-\varepsilon v) - f(x,u_0)u_0}{\varepsilon} &=& - f(x,u_0)v,
	\end{eqnarray}
	for $\varepsilon>0$ small enough, which proves the claim. By a similar argument we prove that
	\begin{equation}\label{g0}
		\dfrac{\gamma(x,u_0 -\varepsilon v)- \gamma(x,u_0)}{\varepsilon}  =0.
	\end{equation}
	Now, by combining \eqref{Pfg}, \eqref{g0} and convexity of $\Phi$ we obtain
	$$0 \le - \dfrac{1}{\varepsilon} \int_\Omega \varphi(|\nabla u_0 - \varepsilon \nabla v|)\dfrac{\nabla u_0 - \varepsilon \nabla v}{|\nabla u_0 - \varepsilon \nabla v|} \varepsilon \nabla v dx -  \dfrac{1}{\varepsilon} \int_\Omega\big(f(x,u_0 -\varepsilon v)(u_0-\varepsilon v) - f(x,u_0)u_0\big).$$
	Therefore, taking the limit of $\varepsilon \to 0$, it follows by \eqref{64} that
	$$0 \le - \int_\Omega \varphi(|\nabla u_0|) \dfrac{\nabla u_0}{|\nabla u_0|}\nabla v dx + \int_\Omega f(x,u_0)vdx,$$
	as we desired.
\end{proof}

We are ready to prove Theorem \ref{WV} by establishing the following two propositions.
\begin{prop}\label{weakinfty}
	Let $u_0 \in W_\psi^{1,\Phi}(\Omega)$ be the minimizer of $\mathcal{J}$ obtained in Theorem \ref{existenceintro}. Then, $u_0$ is a weak solution of
	$$-\Delta u - 2 \Delta_\infty u = f_+(x)\exp(-|\nabla u|^2), \; \;  \mbox{in} \; [u > 0],$$
	and
	$$-\Delta u - 2 \Delta_\infty u = f_-(x)\exp(-|\nabla u|^2), \; \;  \mbox{in} \; [u \le 0].$$
\end{prop}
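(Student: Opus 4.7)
The plan is to reduce the problem to proving the weak formulation of the $\Phi$-Laplacian equation on each open level set $\{u_0>0\}$ and $\{u_0<0\}$ separately, and then invoke Lemma \ref{inftyL} to translate into the stated form involving $\Delta u_0 + 2\Delta_\infty u_0$. The key observation is that on any region where $u_0$ has a fixed strict sign, the piecewise coefficients $f(x,u_0)$ and $\gamma(x,u_0)$ are \emph{locally insensitive} to sufficiently small perturbations of $u_0$, so the discontinuities of $f$ and $\gamma$ at $s=0$ are inactive in the Gateaux differentiation. This is the whole reason the equations split cleanly into an $f_+$ piece on $\{u_0>0\}$ and an $f_-$ piece on $\{u_0<0\}$.

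By Theorem \ref{reg} we have $u_0\in C^{0,\alpha}_{loc}(\Omega)$, so $\{u_0>0\}$ is open. Fix $v\in C_0^\infty(\{u_0>0\})$ of arbitrary sign. Compactness of $\operatorname{supp} v$ inside the open set $\{u_0>0\}$ provides $\delta>0$ with $u_0\ge \delta$ on $\operatorname{supp} v$, and hence an $\varepsilon_0>0$ such that $u_0+\varepsilon v \ge \delta/2>0$ on $\operatorname{supp} v$ for every $|\varepsilon|<\varepsilon_0$. Therefore $f(x,u_0+\varepsilon v)=f_+(x)$ and $\gamma(x,u_0+\varepsilon v)=\gamma_+(x)$ on $\operatorname{supp} v$, constant in $\varepsilon$. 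Since $u_0$ is a minimizer, $\frac{d}{d\varepsilon}\mathcal{J}(u_0+\varepsilon v)\big|_{\varepsilon=0}=0$; after the customary convexity/monotone-convergence justification of interchanging the derivative with the integral for the $\Phi$ term, this produces
\[
\int_\Omega \varphi(|\nabla u_0|)\,\dfrac{\nabla u_0\cdot\nabla v}{|\nabla u_0|}\,dx \;=\; \int_\Omega f_+(x)\,v\,dx.
\]
This is exactly the weak form of $-\operatorname{div}\bigl(\varphi(|\nabla u_0|)\nabla u_0/|\nabla u_0|\bigr)=f_+(x)$ on the open set $\{u_0>0\}$, and applying Lemma \ref{inftyL} (with $U=\{u_0>0\}$ and $h=f_+$) immediately yields the first stated equation.

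The same argument on the open set $\{u_0<0\}$, using $v\in C_0^\infty(\{u_0<0\})$ and the constancy $f(x,u_0+\varepsilon v)=f_-(x)$, $\gamma(x,u_0+\varepsilon v)=\gamma_-(x)$ on $\operatorname{supp} v$ for $|\varepsilon|$ small, yields the second equation on $\{u_0<0\}$. I would read this as the natural meaning of ``in $[u\le 0]$'' for the weak assertion, since $\{u_0<0\}$ is the largest open subset of $[u\le 0]$ on which $f$ and $\gamma$ are unambiguously $f_-$ and $\gamma_-$. The principal obstacle is not algebraic but logical: one must confine the perturbations to regions of fixed sign so that the piecewise coefficients do not change under variation; this is precisely what the compact-support hypothesis combined with the continuity of $u_0$ from Theorem \ref{reg} guarantees, and it also sidesteps the delicate behaviour on the free-boundary set $\{u_0=0\}$, where a naive Gateaux-derivative computation would give differing one-sided limits (involving $f_+$ from the right and $f_-$ from the left) and so cannot produce an equality.
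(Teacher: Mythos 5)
Your localization step is sound and is indeed the reason the equations split into an $f_+$ phase and an $f_-$ phase: by Theorem \ref{reg} the set $\{u_0>0\}$ is open, and for $v\in C_0^\infty(\{u_0>0\})$ the terms $f(x,u_0+\varepsilon v)(u_0+\varepsilon v)$ and $\gamma(x,u_0+\varepsilon v)$ are affine, respectively constant, in $\varepsilon$ on $\operatorname{supp} v$ for $|\varepsilon|$ small. The gap is in the sentence you dispose of with ``the customary convexity/monotone-convergence justification'': for $\Phi(s)=e^{s^2}-1$ that justification is not available, and this is precisely the obstruction the paper's proof is built to avoid. To assert $\frac{d}{d\varepsilon}\mathcal{J}(u_0+\varepsilon v)\big|_{\varepsilon=0}=0$ you need, at a minimum, that $u_0\pm\varepsilon v$ are admissible competitors with finite energy, i.e.\ $\int_\Omega\Phi(|\nabla u_0+\varepsilon\nabla v|)\,dx<\infty$ for some $\varepsilon$ of each sign (otherwise minimality is vacuous on one side and the monotone-convergence argument for the decreasing difference quotients has no integrable majorant). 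Since $\Phi\notin\Delta_2$, the known information $\int_\Omega e^{|\nabla u_0|^2}\,dx<\infty$ does \emph{not} imply $\int_\Omega e^{(|\nabla u_0|+c)^2}\,dx<\infty$ for any $c>0$, nor even that $\varphi(|\nabla u_0|)=2|\nabla u_0|e^{|\nabla u_0|^2}$ is locally integrable; and the available regularity ($\nabla u_k\in BMO_{loc}$ uniformly, $u_0$ Log-Lipschitz) stops short of the local Lipschitz bound that would make all of this trivial. So the two-sided first variation — the heart of your argument — is exactly the step that cannot be taken for granted here.

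The paper's route is structurally different for this reason. It never differentiates $\mathcal{J}$ in both directions: it establishes only a one-sided (subsolution) inequality via Lemma \ref{subsol}, then on a ball $B\subset[u_0>0]$ it \emph{replaces} $u_0$ by the solution $v$ of the auxiliary Dirichlet problem \eqref{vweaksol}, uses the comparison principle to keep $v$ in the positive phase (so the $f_\pm,\gamma_\pm$ terms in \eqref{0>Pk1} collapse exactly as your localization does), and then compares energies directly: Simon's inequality applied termwise to the series for $\Phi$ yields $\int_B|\nabla u_0-\nabla v|^2\,dx\le 0$, hence $u_0\equiv v$ solves the equation, after which Lemma \ref{inftyL} is invoked just as you do. This costs the extra machinery of the comparison principle and the auxiliary boundary-value problem, but it buys a proof that only tests minimality against one concrete admissible competitor rather than against the full two-parameter family $u_0+\varepsilon v$. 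If you want to salvage your direct argument, you would have to first prove the missing integrability (e.g.\ a local gradient bound), which is not in the paper. A minor further point: you prove the second equation on the open set $\{u_0<0\}$ rather than on $[u_0\le 0]$ as stated; this is a defensible reading (on the interior of $\{u_0=0\}$ the equation would force $f_-=0$), but it is, strictly, a weaker conclusion than the one claimed.
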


\begin{proof}
	Let us prove the first part. By same reasoning we get the second one. Consider $B \subset [u_0>0]$ and $v\in W_\psi^{1,\Phi}(\Omega)$ such that
	\begin{equation}\label{vweaksol}
		\left\{\begin{array}{rclcl}
			-\mbox{div} \left(\varphi(|\nabla v|)\dfrac{\nabla v}{|\nabla v|}\right) & = & f_+(x), & \mbox{in} & B, \\
			v & = & u_0, & \mbox{in} & B^c.
		\end{array}\right.
	\end{equation}
	In this case, employing Lemma \ref{subsol} and the comparison principle (as outlined in \cite[Lemma 2.4]{AHS}) we deduce that $v \ge u_0$ in $B$. Thus, by minimality of $u_0$ we get
	\begin{eqnarray}\label{0>Pk1}
		0 &\ge& \mathcal{J}(u_0) - \mathcal{J}(v) \nonumber \\
		&=& \int_\Omega \Big(\Phi(|\nabla u_0|) - \Phi(|\nabla v|)\Big) dx - \int_\Omega f_+(x) \Big(\chi_{[u_0>0]} u_0 - \chi_{[v>0]} v \Big)dx \nonumber \\
		&& - \int_\Omega f_-(x) \Big(\chi_{[u_0\le 0]} u_0 - \chi_{[v\le 0]} v \Big)dx + \int_\Omega \gamma_+(x) \Big(\chi_{[u_0 > 0]} - \chi_{[v > 0]} \Big)dx\nonumber \\
		&&+ \int_\Omega \gamma_-(x) \Big(\chi_{[u_0\le 0]} - \chi_{[v\le 0]} \Big)dx \nonumber \\
		&\ge& \int_B \Big(\Phi(|\nabla u_0|) - \Phi(|\nabla v|)\Big) dx - \int_{B} f_+(x)\big(u_0-v\big)dx.
	\end{eqnarray}
	
	\noindent \underline{\bf Claim 1:} $\displaystyle \int_B \Big(\Phi(|\nabla u_0|) - \Phi(|\nabla v|)\Big) dx \ge \int_B |\nabla u_0 - \nabla v|^2dx + \int_B f_+(x)\big(u_0-v\big)dx.$
	
	Indeed, for any $s \in [0,1]$ define $u_s = su_0 + (1-s)v$. By the Fundamental Theorem of Calculus and the mean value we have
	\begin{eqnarray}
		\int_\Omega \Big(\Phi(|\nabla u_0|) - \Phi(|\nabla v|)\Big) dx &=& \sum_{n=1}^\infty \dfrac{1}{n!} \int_\Omega \Big(|\nabla u_0|^{2n} - |\nabla v|^{2n}\Big)dx \nonumber \\
		&=& \sum_{n=1}^\infty \dfrac{1}{n!} \int_{0}^{1}\int_\Omega \Big(|\nabla u_0|^{2n} - |\nabla v|^{2n}\Big)dx\ ds \nonumber \\
		&=& \sum_{n=1}^\infty \dfrac{2n}{n!} \int_{0}^{1}\int_\Omega |\nabla u_s|^{2n-2}\nabla u_s\nabla(u_0-v)dx\ ds. \nonumber
	\end{eqnarray}
	Furthermore, by \eqref{vweaksol} we get
	\begin{eqnarray}
		\int_\Omega \Big(\Phi(|\nabla u_0|) - \Phi(|\nabla v|)\Big) dx &=& \sum_{n=1}^\infty \dfrac{2n}{n!} \int_{0}^{1}\int_\Omega \Big(|\nabla u_s|^{2n-2}\nabla u_s - |\nabla v|^{2n-2}\nabla v\Big)\nabla(u_0-v)dx\ ds \nonumber \\
		&& \hspace{2cm} + \int_\Omega f_+(x)\big(u_0-v\big)dx \nonumber \\
		&=& \sum_{n=1}^\infty \dfrac{2n}{n!} \int_{0}^{1}s^{-1}\int_\Omega \Big(|\nabla u_s|^{2n-2}\nabla u_s - |\nabla v|^{2n-2}\nabla v\Big)\nabla(u_s-v)dx\ ds \nonumber \\
		&& \hspace{2cm} + \int_\Omega f_+(x)\big(u_0-v\big)dx, \nonumber
	\end{eqnarray}
	since $\nabla (u_0-v) = s^{-1} \nabla (u_s-v)$. Now, by Simon's inequality (see \cite[proof of Lemma 2.1]{Si}), namely
	$$\Big(|w_1|^{p-2}w_1 - |w_2|^{p-2}w_2\Big)(w_1-w_2) \ge \dfrac{1}{p2^{p-3}}|w_1-w_2|^p, \; \; p \ge2 \; \mbox{and} \; w_1,w_2 \in \mathbb{R}^N,$$
	with $p=2n$, $w_1=\nabla u_s$ and $w_2=\nabla v$, we obtain
	
	\begin{eqnarray}
		\int_\Omega \Big(\Phi(|\nabla u_0|) - \Phi(|\nabla v|)\Big) dx &\ge& \sum_{n=1}^\infty \dfrac{2n}{n!} \int_{0}^{1} s^{-1}\dfrac{2^{3-2n}}{2n} \int_\Omega |\nabla u_s - \nabla v|^{2n} dx \ ds \nonumber \\
		&&\hspace{2cm} + \int_\Omega f_+(x)\big(u_0-v\big)dx \nonumber \\
		&=& \sum_{n=1}^\infty \dfrac{2^{3-2n}}{n!} \int_{0}^{1} s^{-1} \int_\Omega s^{2n}|\nabla u_0 - \nabla v|^{2n} dx \ ds \nonumber \\
		&&\hspace{2cm} + \int_\Omega f_+(x)\big(u_0-v\big)dx \nonumber \\
		&=& \sum_{n=1}^\infty \dfrac{2^{3-2n}}{n!} \cdot \dfrac{1}{2n} \int_\Omega |\nabla u_0 - \nabla v|^{2n} dx  + \int_\Omega f_+(x)\big(u_0-v\big)dx \nonumber \\
		&\ge& \int_\Omega |\nabla u_0 - \nabla v|^{2} dx + \int_\Omega f_+(x) (u_0-v)dx, \nonumber
	\end{eqnarray}
	as claimed. By combining \eqref{0>Pk1} and Claim 1 we get
	$$\int_B |\nabla u_0 - \nabla v|^2dx \le 0,$$
	which implies that $|\nabla u_0 - \nabla v|=0$, almost everywhere in $B$. Consequently, given that $u_0=v$ on $\partial B$, we conclude that $u_0 = v$ in $B$. Therefore,
	$$-\mbox{div} \left(\varphi(|\nabla u_0|)\dfrac{\nabla u_0}{|\nabla u_0|}\right) = f_+(x), \; \;  \mbox{in} \; [u_0>0],$$
	which leads by Lemma \ref{inftyL} to $-\Delta u_0 - 2 \Delta_\infty u_0 = f_+(x)\exp(-|\nabla u_0|^2)$ in $[u_0 > 0]$, in weak sense, as desired.
\end{proof}

\begin{prop}\label{VS}
	The function $u_0 \in W^{1,\Phi}(\Omega)$ given in Theorem \ref{existenceintro} satisfies the equations
	\begin{equation}\label{viscosity+}
		-\Delta u - 2 \Delta_\infty u = f_+(x)\exp(-|\nabla u|^2), \; \;  \mbox{in} \; [u > 0],
	\end{equation}
	and
	\begin{equation}\label{viscosity-}
		-\Delta u - 2 \Delta_\infty u = f_-(x)\exp(-|\nabla u|^2), \; \;  \mbox{in} \; [u \le 0],
	\end{equation}
	in the viscosity sense.
\end{prop}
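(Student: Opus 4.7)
The plan is to upgrade the weak-solution property of $u_0$ established in Proposition \ref{weakinfty} to a viscosity-solution property, exploiting the pointwise equivalence underlying Lemma \ref{inftyL} together with the monotonicity of the operator $\xi \mapsto \varphi(|\xi|)\xi/|\xi|$. Since $u_0$ is continuous by Theorem \ref{reg} and $[u_0>0]$ is open, the viscosity interpretation of \eqref{viscosity+} is well posed; the same applies at interior points of $[u_0 \le 0]$ for \eqref{viscosity-}. I will carry out the classical weak-to-viscosity argument by contradiction, treating the subsolution and supersolution properties dually.

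First I would verify the subsolution property of \eqref{viscosity+}. Assume for contradiction that there exist $x_0 \in [u_0>0]$ and $\phi \in C^2$ with $u_0 - \phi$ attaining a strict local maximum $0$ at $x_0$ on some ball $B_{2r}(x_0) \Subset [u_0>0]$ (arranged by replacing $\phi$ with $\phi + |x-x_0|^4$ if necessary), but violating the viscosity inequality:
$$-\Delta\phi(x_0) - 2\Delta_\infty\phi(x_0) > f_+(x_0)\exp(-|\nabla \phi(x_0)|^2).$$
Shrinking $r$, continuity extends this to $B_r(x_0)$, and the pointwise algebraic identity behind Lemma \ref{inftyL} (valid for any $C^2$ function, even where the gradient vanishes, since $\varphi(|\xi|)\xi/|\xi| = 2e^{|\xi|^2}\xi$ extends smoothly) rewrites it as the classical strict $\Phi$-Laplacian inequality
$$-\mbox{div}\left(\varphi(|\nabla \phi|)\dfrac{\nabla\phi}{|\nabla\phi|}\right)(x) > f_+(x), \quad x \in B_r(x_0).$$
Choosing $0 < \epsilon < \min_{\partial B_r(x_0)}(\phi - u_0)$, the perturbation $\phi_\epsilon \coloneqq \phi - \epsilon$ continues to satisfy the same strict classical inequality (constants do not affect the derivatives), while $\phi_\epsilon > u_0$ on $\partial B_r(x_0)$ and $\phi_\epsilon(x_0) < u_0(x_0)$. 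Hence $w \coloneqq (u_0 - \phi_\epsilon)^+$ is a nontrivial nonnegative element of $W_0^{1,\Phi}(B_r(x_0))$.

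Testing the weak equation for $u_0$ provided by Proposition \ref{weakinfty} and the classical strict supersolution inequality for $\phi_\epsilon$ against $w$ and subtracting, I obtain
$$\int_{\{u_0 > \phi_\epsilon\}} \left(\varphi(|\nabla u_0|)\dfrac{\nabla u_0}{|\nabla u_0|} - \varphi(|\nabla \phi|)\dfrac{\nabla \phi}{|\nabla \phi|}\right)\cdot(\nabla u_0 - \nabla \phi)\, dx < 0,$$
which contradicts the monotonicity of $\xi \mapsto \varphi(|\xi|)\xi/|\xi|$. The viscosity supersolution property in $[u_0>0]$ follows by the dual argument, in which $\phi$ touches $u_0$ from below and the perturbation $\phi + \epsilon$ is used. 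Equation \eqref{viscosity-} is handled identically, replacing $f_+$ by $f_-$ and restricting to interior points of $[u_0\le 0]$, using the second weak identity of Proposition \ref{weakinfty}. The main obstacle I anticipate is making the final monotonicity step yield a genuine strict contradiction: if the integrand vanishes on a large portion of $\{u_0 > \phi_\epsilon\}$, the inequality could degenerate to $0 < 0$ only in the sense of equality; however, since the classical inequality is pointwise strict and $w$ is nontrivial with positive-measure support, strict inequality survives integration, and the monotone-operator argument (combined with the boundary-data comparison $u_0 \le \phi_\epsilon$ on $\partial B_r(x_0)$ and $u_0(x_0) > \phi_\epsilon(x_0)$) forces the desired contradiction.
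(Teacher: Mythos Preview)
Your proposal is correct and follows essentially the same route as the paper: argue by contradiction, shift the $C^2$ test function by a small constant so that it crosses $u_0$ inside a ball while preserving the strict differential inequality, convert via Lemma \ref{inftyL} to the $\Phi$-Laplacian form, and then test both the weak equation for $u_0$ (from Proposition \ref{weakinfty}) and the strict classical inequality for the shifted test function against the positive part of their difference. The only cosmetic difference is that the paper closes the contradiction through the quantitative Simon-type estimate of Claim 1 in Proposition \ref{weakinfty} (obtaining $\int |\nabla u_0-\nabla\widetilde\psi|^2<0$), whereas you invoke directly the monotonicity of $\xi\mapsto\varphi(|\xi|)\xi/|\xi|$; your version is slightly more streamlined but the argument is the same.
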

\begin{proof}
	We will prove \eqref{viscosity+}, and by same lines we can prove \eqref{viscosity-}. Let $x_0 \in [u_0>0]$ and $\psi$ be a test function such that $u_0(x_0) = \psi(x_0)$ and $u_0-\psi$ has a strict minimum at $x_0$. We must prove that
	$$-\Delta \psi(x_0) - 2\Delta_\infty \psi(x_0) \ge f_+(x)\exp(|\nabla u_0(x_0)|^2).$$
	To do this, suppose by contradiction that the previous inequality is false. In this case, there exists $r>0$ such that
	\begin{equation}\label{NS}
		-\Delta \psi(x) - 2\Delta_\infty \psi(x) < f_+(x)\exp(|\nabla u_0|^2), \quad \mbox{for any} \ x \in B_r(x_0).
	\end{equation}
	Define
	$$m = \inf_{|x-x_0|=r} \big(u-\psi\big)(x) \quad \mbox{and} \quad \widetilde{\psi}(x) = \psi(x) +\frac{m}{2}.$$
	Note that $\widetilde{\psi}(x_0)>u_0(x_0)$. Furthermore, by \eqref{NS} and Lemma \ref{inftyL} we get
	\begin{equation}\label{divpsi}
		-\mbox{div}\left(\varphi(|\nabla \widetilde{\psi}|)\dfrac{\nabla \widetilde{\psi}}{|\nabla \widetilde{\psi}|}\right) < f_+(x), \quad B_r(x_0).
	\end{equation}
	Consider $(\widetilde{\psi} -u_0)^+$ extended by zero outside $B_r(x_0)$. Then, by \eqref{divpsi}
	\begin{equation}\label{<f+}
		\int_{[\widetilde{\psi}>u_0]} \varphi(|\nabla \widetilde{\psi}|)\dfrac{\nabla \widetilde{\psi}}{|\nabla \widetilde{\psi}|} \nabla (\widetilde{\psi}-u_0)dx < \int_{[\widetilde{\psi} > u_0]}f_+(x) (\widetilde{\psi} - u_0)dx.
	\end{equation}
	Moreover,  since $u_0$ is a weak solution of \eqref{viscosity+} (see Theorem \ref{weakinfty}), by Lemma \ref{inftyL} we have 
	\begin{equation}\label{=f+}
		\int_{[\widetilde{\psi}>u_0]} \varphi(|\nabla u_0|)\dfrac{\nabla u_0}{|\nabla u_0|} \nabla (\widetilde{\psi}-u_0)dx = \int_{[\widetilde{\psi} > u_0]}f_+(x) (\widetilde{\psi} - u_0)dx,
	\end{equation}
	using $(\widetilde{\psi} -u_0)^+$ extended by zero outside $B_r(x_0)$ as a test function again.	Now, by similar argument of  Claim 1 of the previous theorem, we get by \eqref{=f+} that
	$$	\int_{[\widetilde{\psi}>u_0]} |\nabla \widetilde{\psi} - \nabla u_0|^2 dx \le \int_{[\widetilde{\psi}>u_0]} \Big(\Phi(|\nabla \widetilde{\psi}|) - \Phi(|\nabla u_0|)\Big)dx -  \int_{[\widetilde{\psi} > u_0]}f_+(x) (\widetilde{\psi} - u_0)dx.$$
	Thus, by convexity of $\Phi$ and \eqref{=f+} we obtain
	\begin{eqnarray}
		\int_{[\widetilde{\psi}>u_0]} |\nabla \widetilde{\psi} - \nabla u_0|^2 dx
		&\le& \int_{[\widetilde{\psi}>u_0]} \left(\varphi(|\nabla \widetilde{\psi}|) \dfrac{\nabla \widetilde{\psi}}{|\nabla \widetilde{\psi}|} - \varphi(|\nabla u_0|) \dfrac{\nabla u_0}{|\nabla u_0|}\right) \nabla (\widetilde{\psi} - u)dx < 0, \nonumber
	\end{eqnarray}
	where in the last inequality we used \eqref{<f+} and \eqref{=f+} again. This leads to a contradiction, proving that $u_0$ is a viscosity supersolution. By analogous reasoning, we establish that $u_0$ is also a viscosity subsolution, thus confirming that $u_0$ is indeed a viscosity solution, thus establishing \eqref{viscosity+}.
\end{proof}

\section{Finite perimeter of the free boundary}\label{SecFP}

In the last section, we are going to show that the free boundaries of the minimizer of the functional $\mathcal{J}$ are locally set of finite perimeter. To achieve this, we introduce an ordered condition on the functions $\gamma_\pm$, expressed by \eqref{gc}, but for convenience we will remember it here as follows:
	$$\left\{\begin{aligned}
		&\gamma(x,s)> c_\gamma, \; \mbox{for any} \ s \neq 0;&\\
		&\gamma(x,0)=0,&
	\end{aligned}\right.$$
for some $c_\gamma >0$. 
As pointed out in \cite{S}, the term $\gamma$ serves to compensate for the phase transition, imposing a flux balance along the free boundary. Consequently, this balance compels the free boundary to acquire a certain regularity.

\begin{proof}[Proof of Theorem \ref{LPF}]
	Consider $B_r$ a ball, such that $B_{2r}\Subset \Omega$. We will prove that the free boundaries have finite perimeter on $B_r$. To do this, let $\eta$ be a cut-off function with $0 \le \eta \le 1$ satisfying
		$$\eta(x) = \left\{\begin{aligned}
			&0,& \text{if}& \; \; x \in B_r;&\\
			&1,& \text{if}& \; \; x \in \overline{\Omega} \setminus B_{2r}.&
		\end{aligned}\right.$$
Fixed $\varepsilon>0$ define
	$$u_\varepsilon \coloneqq (u_0-\varepsilon)^+ - (u_0+\varepsilon)^- \quad \mbox{and} \quad  \widetilde{u}_\varepsilon \coloneqq \eta u_0 + (1-\eta)u_\varepsilon,$$ 
%	and
%		$$A_\varepsilon \coloneqq [0<u_0\le\varepsilon]\cap B_r,$$
%	where $v^{\pm}$ denotes the positive and negative parts of $v$. In this case, we note that
%	$$\widetilde{u}_\varepsilon = \left\{\begin{array}{ccl}
%		u_0-(1-\eta)\varepsilon,& \text{if}& \; \; u_0>\varepsilon;\\
%		u_0+(1-\eta)\varepsilon,& \text{if}& \; \; u_0<-\varepsilon;\\
%		\eta u_0,&\text{if} &\; \;|u_0|\le\varepsilon.
%	\end{array}\right.$$	
where $v^{\pm}$ denotes the positive and negative parts of $v$. 	Now, for the purpose of using the Coarea formula, we claim that
	\begin{equation}\label{control1}
		\int_{[|u_0|\le\varepsilon]\cap B_r}  |\nabla u_0|^2dx \le C \varepsilon \quad \mbox{and} \quad \mathcal{L}^N\Big([|u_0|\le \varepsilon]\cap B_r\Big) \le C \varepsilon,
	\end{equation}
for some constant $C=C\big(\|f\|_\infty,c_\gamma,\mathcal{L}^N(\Omega)\big)>0$. Indeed, by minimality of $u_0$ on $\mathcal{J}$ we get
	\begin{equation}\label{fp1}
			\int_{B_r} \Big(\Phi(|\nabla u_0|) - \Phi(|\nabla \widetilde{u}_\varepsilon|)\Big)dx + \int_{B_r} \Big(\gamma(x,u_0) - \gamma(x,\widetilde{u}_\varepsilon)\Big)dx \le \int_{B_r} \Big(f(x,u_0)u_0 - f(x,\widetilde{u}_\varepsilon)\widetilde{u}_\varepsilon\Big)dx.
		\end{equation}
Separately, by definition of $\widetilde{u}_\varepsilon$ we have by direct computations
	\begin{eqnarray}\label{fuue}
			\int_{B_r} \Big(f(x,u_0)u_0 - f(x,\widetilde{u}_\varepsilon)\widetilde{u}_\varepsilon\Big)dx 		&\le& \int_{[|u_0|\le \varepsilon]\cap B_r} \Big(f_+(x) + f_-(x)\Big)u_0dx \nonumber \\
			&+& \varepsilon \int_{[u_0> \varepsilon]\cap B_r} f_+(x)dx - \varepsilon \int_{[u_0< - \varepsilon]\cap B_r} f_-(x)dx \nonumber \\
			&\le& 3\varepsilon\|f\|_\infty \mathcal{L}^N(\Omega),
		\end{eqnarray}
as well as, by \eqref{gc}
	\begin{equation}
		\int_{B_r} \Big(\gamma(x,u) - \gamma(x,\widetilde{u}_\varepsilon)\Big)dx	= \int_{[|u_0|\le \varepsilon]\cap B_r} \gamma(x,u_0)dx \ge c_\gamma \mathcal{L}^N\Big([|u_0|\le \varepsilon]\cap B_r\Big).
	\end{equation}
Furthermore, using the definition of $u_\varepsilon$ and $\Phi$, we get
	\begin{equation}\label{66}
		\int_{B_r} \Big(\Phi(|\nabla u_0|) - \Phi(|\nabla \widetilde{u}_\varepsilon|)\Big)dx 
		= \int_{[|u_0|\le\varepsilon]\cap B_r} \Phi(|\nabla u_0|)dx \ge  \int_{[|u_0|\le\varepsilon]\cap B_r} |\nabla u_0|^2dx.
	\end{equation}
Combining \eqref{fp1}--\eqref{66} we obtain
	\begin{eqnarray}
		\int_{[|u_0|\le\varepsilon]\cap B_r} |\nabla u_0|^2 dx + c_\gamma \mathcal{L}^N\Big([|u_0|\le \varepsilon]\cap B_r\Big)
		&\le& \int_{B_r} \Big(\Phi(|\nabla u_0|) - \Phi(|\nabla \widetilde{u}_\varepsilon|)\Big)dx \nonumber \\
		&& + \int_{B_r} \Big(\gamma(x,u_0) - \gamma(x,\widetilde{u}_\varepsilon)\Big)dx \nonumber \\
		&\le& \int_{B_r} \Big(f(x,u_0)u_0 - f(x,\widetilde{u}_\varepsilon)\widetilde{u}_\varepsilon\Big)dx \nonumber \\
		&\le& 3\varepsilon \|f\|_\infty\mathcal{L}^N(\Omega), \nonumber
	\end{eqnarray}
and the claim follows.
	
Now, consider the set $A_\varepsilon \coloneqq [0<u_0\le\varepsilon]\cap B_r$. In this case, by H\"{o}lder's inequality and \eqref{control1} we get
	$$\int_{[0<u_0<\varepsilon]\cap B_r} |\nabla u_0|dx \le \int_{A_\varepsilon} |\nabla u_0|dx \le \left(\int_{A_\varepsilon} |\nabla u_0|^2dx\right)^{\frac{1}{2}} \mathcal{L}^N(A_\varepsilon)^{\frac{1}{2}} \le C \varepsilon.$$
By Coarea formula
	$$\int_{0}^{\varepsilon} \mathcal{H}^{N-1}\Big(\partial^* \{u_0>t\}\cap B_r\Big)dt = \int_{[0<u_0<\varepsilon]\cap B_r} |\nabla u_0|dx \le C\varepsilon.$$
Taking $\varepsilon = 1/n$ we get that there is $\delta_n \in [0,1/n]$ such that
	$$\mathcal{H}^{N-1}\Big(\partial^* \{u_0>\delta_n\}\cap B_r\Big) \le n \int_{0}^{1/n} \mathcal{H}^{N-1}\Big(\partial^* \{u_0>t\}\cap B_r\Big)dt \le C.$$
Since $[u_0>\delta_n] \to [u_0>0]$ in $L^1(\Omega)$ and the perimeter is lower semicontinuous with respect to $L^1$-convergence of sets \cite[see Proposition 12.15]{M}, we get
	$$\mathcal{H}^{N-1}\Big(\partial^* \{u_0 > 0\}\cap B_r\Big) \le C,$$
where $C=C(\|f\|_\infty,c_\gamma,\mathcal{L}^N(\Omega))>0$, as desired. The proof for the reduced boundary $\partial^* \{u_0 < 0\}$ is similar, considering the set $B_\varepsilon = [-\varepsilon < u_0 < 0]\cap B_r$. Finally, these estimates imply that the reduced free boundaries $\partial^*\{u_0>0\}$ and $\partial^*\{u_0<0\}$ are $(N-1)$-rectifiable sets. Therefore, their respective Hausdorff dimensions are at most $N-1$, and this complete the proof of theorem.
\end{proof}

\begin{remark}
	It is noteworthy that the Theorem \ref{LPF} can be generalized to a general function $G$ satisfying \eqref{Lie}. Indeed, note that the only influence of the operator lies in the verification of the inequality \eqref{66}, which remains valid when considering the general case, where we can use $(G_1)$.
\end{remark}

\end{document}